\newcommandx{\attn}[2][1=]{\todo[linecolor=red,backgroundcolor=blue!25,bordercolor=red,#1]{#2}}
\newcommandx{\other}[2][1=]{\todo[linecolor=OliveGreen,backgroundcolor=OliveGreen!25,bordercolor=OliveGreen,#1]{#2}}
\newcommandx{\thiswillnotshow}[2][1=]{\todo[disable,#1]{#2}}
\definecolor{myblue}{rgb}{.9, .9, 1}
\crefname{equation}{}{equations}
\crefname{chapter}{Appendix}{chapters}
\crefname{item}{}{items}
\crefname{figure}{Figure}{figures}
\def\namedlabel#1#2{\begingroup
   \def\@currentlabel{#2}%
   \label{#1}\endgroup
}
\def\th@plain{%
  \thm@notefont{}
  \itshape 
}
\def\th@definition{%
  \thm@notefont{}
  \normalfont 
}
\newtheorem{theorem}{Theorem}[section]
\newtheorem{lemma}[theorem]{Lemma}
\newtheorem{corollary}[theorem]{Corollary}
\newtheorem{proposition}[theorem]{Proposition}
\newtheorem{fact}[theorem]{Fact}
\theoremstyle{definition}
\theoremstyle{definition}
\newtheorem{example}[theorem]{Example}
\theoremstyle{definition}
\newtheorem{remark}[theorem]{Remark}
\setlist[enumerate]{nosep}
\newcommand{\scal}[2]{\left\langle {#1},{#2} \right\rangle}
\newcommand{\To}{\ensuremath{\rightrightarrows}}
\newcommand{\NN}{\ensuremath{\mathbb N}}
\newcommand{\RR}{\ensuremath{\mathbb R}}
\newcommand{\inte}{\ensuremath{\operatorname{int}}}
\newcommand{\conv}{\ensuremath{\operatorname{conv}}}
\newcommand{\ran}{\ensuremath{\operatorname{ran}}}
\newcommand{\dom}{\ensuremath{\operatorname{dom}}}
\newcommand{\epi}{\ensuremath{\operatorname{epi}}}
\newcommand{\Id}{\ensuremath{\operatorname{Id}}}
\newcommand{\ent}{\ensuremath{\operatorname{ent}}}
\newcommand{\lev}[1]{\ensuremath{\mathrm{lev}_{\leq #1}\:}}
\newcommand{\fenv}[2][]%
{\ensuremath{\,\overrightarrow{\operatorname{env}}_{#2}}^{#1}}
\newcommand{\benv}[2][]%
{\ensuremath{\,\overleftarrow{\operatorname{env}}_{#2}^{#1}}}
\newcommand{\env}[2][]%
{\ensuremath{{\operatorname{env}}_{#2}^{#1}}}
\newcommand{\W}{\mathcal W}
\newcommand{\bP}[2][]%
{\ensuremath{\,\overleftarrow{\operatorname{P}}_{#2}^{#1}}}
\newcommand{\fP}[2][]%
{\ensuremath{\,\overrightarrow{\operatorname{P}}_{#2}^{#1}}}
\begin{document}

\title{The Generalized Bregman Distance}

\author{
Regina S.\ Burachik\thanks{
Mathematics, UniSA STEM, University of South Australia, Mawson Lakes, SA 5095, Australia. 
E-mail: \texttt{regina.burachik@unisa.edu.au}.},~
Minh N.\ Dao\thanks{
School of Engineering, Information Technology and Physical Sciences, Federation University Australia, Ballarat, VIC 3353, Australia.
E-mail: \texttt{m.dao@federation.edu.au}.},~~~and~
Scott B.\ Lindstrom\thanks{
Department of Applied Mathematics, Hong Kong Polytechnic University, Hong Kong.
E-mail: \texttt{scott.b.lindstrom@polyu.edu.hk}.}
}

\date{September 26, 2020}

\maketitle

\begin{abstract} \noindent
Recently, a new distance has been introduced for the graphs of two point-to-set operators, one of which is maximally monotone. When both operators are the subdifferential of a proper lower semicontinuous convex function, this distance specializes under modest assumptions to the classical Bregman distance. We name this new distance the \emph{generalized Bregman distance}, and we shed light on it with examples that utilize the other two most natural representative functions: the Fitzpatrick function and its conjugate. We provide sufficient conditions for convexity, coercivity, and supercoercivity: properties that are essential for implementation in proximal point type algorithms. We establish these results for both the left and right variants of this new distance. We construct examples closely related to the Kullback--Leibler divergence, which was previously considered in the context of Bregman distances, and whose importance in information theory is well known. In so doing, we demonstrate how to compute a difficult Fitzpatrick conjugate function, and we discover natural occurrences of the Lambert $\W$ function, whose importance in optimization is of growing interest.  
\end{abstract}

{\small 
\noindent
{\bfseries 2010 Mathematics Subject Classification:}
Primary 90C25;
Secondary 49K40, 47H05.


\noindent
{\bfseries Keywords:}
Bregman distance, 
generalized Bregman distance,
convex function, 
Fitzpatrick distance, 
Fitzpatrick function,
representative function,
regularization. 
}

\section{Introduction}

Throughout, unless stated otherwise, $\left( X,\left\Vert \cdot \right\Vert \right) $ is a real Banach space with dual $\left( X^\ast,\left\Vert \cdot \right\Vert_\ast \right)$, and $\Gamma_0(X)$ is the set of all proper lower semicontinuous convex functions from $X$ to $\mathbb{R}_{\infty} :=\left]-\infty, +\infty\right]$. 

In 1967, Bregman introduced the distance constructed for a differentiable convex function $f$, 
\begin{equation} 
\label{e:Df}
\mathcal{D}_f \colon X \times X \to \left[0, +\infty\right] \colon (x,y) \mapsto
\begin{cases}
f(x) -f(y) -\scal{\nabla f(y)}{x -y} &\text{~if~} y\in \inte \dom f, \\
+\infty &\text{~otherwise},
\end{cases}
\end{equation} 
which now bears his name \cite{Bre67} and whose corresponding envelopes and proximity operators specify to the Moreau proximity operator \cite{Attouch77,Attouch84,Mor65} and envelope when $f$ is the energy $\left \| \cdot \right \|^2/2$. The study of Bregman distances has become popular, following their 1981 reintroduction by Censor and Lent \cite{Censor1981}. Bregman functions were introduced to solve feasibility problems, and then used for producing more general versions of the classical proximal-point algorithm, both for convex minimization and for monotone variational inequality problems. The use of Bregman distances in prox-like methods for convex minimization is found in \cite{BC19,BCN06,ByrCen01,CH02,Censor1992,Censor1997,CT93,Kiw97,Reich2010,Reem2019}, while the use for monotone variational inequalities can be found in \cite{BurDut10,BI98,BI99,BurKas12,BS01,Eck93}. Properties of Bregman functions have been the focus of research since the late 1990s, see \cite{BB97,BBC01,CIZ98}. A good, brief bibliographic overview of their history is found in \cite[p.1233]{Facchinei}, and the book of Censor and Zenios is also instructive \cite{Censor1997}. When $f$ is not the energy, the distance may fail to be symmetric, and so one is led to consider the left and right versions of envelopes and their associated proximity operators. The asymptotic properties of Bregman envelopes with respect to a parameter were explored in \cite{BDL17}.  

From \cite[Proposition~3.2]{BB97}, we have a dual characterization of the Bregman distance for a differentiable convex function $f$:
\begin{subequations}
\begin{align}
\mathcal{D}_f(x,y) &= f(x)-f(y)-\langle \nabla f(y),x-y \rangle \label{FY1}\\
&=f(x)+f^*(\nabla f(y))- \langle \nabla f(y),x \rangle \label{FY2}
\end{align}
\end{subequations} 
Here \eqref{FY1} is the definition of the Bregman distance and \eqref{FY2} uses the Fenchel--Young equality: $(\forall v \in \nabla f(y))\; f(y)+f^*(v)=\langle y, v\rangle$. From \eqref{FY2}, they made the observation that 
\begin{equation}
\mathcal{D}_f(x,y)=0 \iff \nabla f(x)=\nabla f(y),
\end{equation}
and so $\mathcal{D}_f$ has the dual characterization of serving as a distance between gradients\footnote{To show $\impliedby$, simply substitute $\nabla f(x)$ for $\nabla f(y)$ and apply Fenchel--Young once more, this time for the variable $x$ instead of $y$.}. Based on this characterization, they introduced a distance based on the representative function $h$ of a monotone operator:
\begin{subequations}
\begin{align}
\mathcal{D}_T^{\flat,h}(x,y) &:=\inf_{v\in Ty}\left( h(x,v)-\langle
x,v\rangle \right),\\ 
\mathcal{D}_T^{\sharp,h}(x,y) &:=\sup_{v\in Ty}\left( h(x,v)-\langle
x,v\rangle \right).
\end{align}
\end{subequations} 
This distance generalizes the Bregman distance, specializing---under the mild domain conditions in \ref{ss:generalizesbregman}---thereto when $h$ is the \emph{Fenchel--Young representative} for $T=\nabla f$, which is defined by $f \oplus f^\ast(x,y)=f(x)+f^\ast(y)$.

Naturally, we name this more general distance the \emph{generalized Bregman distance (GBD)}. In lieu of the Fenchel--Young representative, the Fitzpatrick function and its conjugate are the two other functions that are most natural to consider. As with the Bregman distance, we obtain left and right variants; these admit new left and right coercivity and supercoercivity properties, along with envelopes and corresponding proximity operators, when the GBD replaces the Bregman distance in the construction of envelopes.

\subsection{Outline and contributions}

This article is outlined as follows. In Section~\ref{sec:preliminaries}, we recall the GBD as introduced in \cite{BM-L18}. We provide its basic properties and clarify the domain conditions under which the Fenchel--Young representative case specializes to the Bregman distance. We also introduce the closed variant, which may specialize to the Bregman distance at more points on the boundary of the domain.

In Section~\ref{sec:examples}, we show how to compute the new GBDs. We first illustrate with the energy, which specifies to the Moreau case when the Fenchel--Young representative is used. We also illustrate with the Boltzmann--Shannon entropy, whose derivative is the logarithm and whose Bregman distance, the Kullback--Leibler divergence, is commonly used as a measure of the difference between positive vectors in information theory and elsewhere. We compare the Kullback--Leibler divergence with the similar Fenchel--Young representative GBD for the logarithm, and also with its closed version. We also illustrate how to compute with the two other most natural representatives to consider: the book-end cases for the representative function set. These are the smallest representative, named the \emph{Fitzpatrick function}, and the biggest representative, which is obtained using the conjugate of the Fitzpatrick function.

Interestingly, while the Fitzpatrick function for the logarithm was discovered in \cite{BMcS06}, the present work contains the first computation of its conjugate. The discovery and proof rely upon the graphical characterizations of representative functions, and the special function Lambert $\W$ plays an important role in the computational aspects of discovery. The way that we tackle this problem is very prototypical of the approach that one might need to use when computing other representative functions and GBDs. We furnish a full discussion of the process in Section~\ref{subsec:BSentropy}, so that it may serve as a tutorial for other researchers.

Section~\ref{sec:properties} contains our most important results. We provide a framework of sufficient conditions for coercivity and supercoercivity of the left and right GBDs. This framework uses the fact that the GBDs majorize a set distance. We illustrate with figures in 2 dimensions, and we provide examples of what may go wrong when the sufficient conditions provided by our framework are not satisfied. In Section~\ref{sec:coercivityofthesum}, we explain how these coercivity and supercoercivity conditions may be used to guarantee the coercivity of the sum of the distance together with a Legendre function. 

Such sums are the basis of the corresponding envelope functions and their proximity operators. In the Bregman case, these coercivity conditions admit the further analysis of the envelopes and proximity operators, including their asymptotic behaviour as the scalar parameter varies \cite{BDL17}. Our work lays the necessary foundation for such an analysis in the case of envelopes built from GBDs. The study of envelopes is important, because many optimization algorithms may be viewed as special cases of gradient descent applied to envelopes; see, for example, \cite{Patrinos2014,Themelis2018}. We conclude in Section~\ref{sec:conclusion}.

\section{Preliminaries on generalized Bregman distances}\label{sec:preliminaries}

Given a function $f\colon X\to \mathbb{R}_{\infty}$, its \emph{domain} (or \emph{effective domain}) is defined by $\dom f :=\{x\in X: f(x) <\infty\}$ and its \emph{lower level set} at height $\xi\in \mathbb{R}$ by $\lev{\xi} f :=\{x\in X: f(x)\leq \xi\}$. 
The function $f$ is said to be \emph{proper} if $\dom f\neq \varnothing$; \emph{lower semicontinuous (lsc)} at $\bar{x}$ if $f(\bar{x})\leq \liminf_{x\to \bar{x}} f(x)$; \emph{convex} if  
\begin{equation}
\forall x,y\in X,\ \forall \lambda\in [0,1],\quad 
f((1-\lambda)x+\lambda y)\leq (1-\lambda)f(x) +\lambda f(y);
\end{equation}
\emph{coercive} if $\lim_{\|x\|\to \infty} f(x) =\infty$; and \emph{supercoercive} if $\lim_{\|x\|\to \infty} f(x)/\|x\| =\infty$. 

Let $f\colon X\to \mathbb{R}_{\infty}$ be proper. The \emph{subdifferential} of $f$ is the point-to-set mapping $\partial
f:X\rightrightarrows X^{\ast }$ defined by 
\begin{equation}\label{e:partialf}
\partial f(x):=
\begin{cases}
\{v\in X^\ast: \forall y\in X,\; \langle y-x,v \rangle +f(x)\leq f(y)\} & \text{if~} x\in \dom f,\\ 
\varnothing & \text{otherwise}.%
\end{cases}
\end{equation}
The \emph{Fenchel conjugate} of $f$ is the mapping
\begin{equation}
f^\ast\colon X^\ast\to \mathbb{R}_{\infty}\colon v\mapsto \sup_{x\in X}\{\langle x,v\rangle -f(x)\}.
\end{equation} 
From the definition, we have the \emph{Fenchel-Young inequality}
\begin{equation}
\forall (x,v)\in X\times X^\ast,\quad f(x) +f^\ast(v)\geq \langle x,v \rangle,
\end{equation}
and if $f$ is convex, then 
\begin{equation}
f(x) +f^\ast(v) =\langle x,v \rangle \iff v\in \partial f(x).
\end{equation}

Given a point-to-set operator $T\colon X\rightrightarrows X^{\ast}$, its \emph{domain} is $\dom T :=\{x\in X: Tx\neq \varnothing\}$, its \emph{range} is $\ran T :=T(X)$, and its \emph{graph} is $\mathcal{G} (T) :=\{(x,x^{\ast})\in X\times
X^{\ast}: x^{\ast}\in Tx\}$. Additionally, $T$ is said to be maximally monotone if 
\begin{equation}\label{d:maximallymonotone}
(x,u) \in \mathcal{G} (T) \quad \iff \quad (\forall (y,v)\in \mathcal{G} (T)) \quad \langle x-y,u-v \rangle \geq 0.
\end{equation}
A detailed study of maximally monotone operators can be found in \cite[Chapters~20 and 21]{BC17} for Hilbert spaces, and in \cite[Chapter~4]{BI08} for the Banach space case.

\subsection{Representative functions}

Let $S\colon X\rightrightarrows X^\ast$ be a maximally monotone operator. We recall from \cite[Definition~2.3]{BM-L18} that $h\colon X\times X^\ast\to \mathbb{R}_{\infty}$ \emph{represents $S$} and denote $h\in \mathcal{H}(S)$ if the following three conditions hold:
\begin{enumerate}
\item[(a)]
$h$ is convex and norm $\times $ weak$^{\ast }$ lower semicontinuous in $X\times X^\ast$.
\item[(b)]
$\forall (x,v)\in X\times X^\ast,\ h(x,v)\geq \langle x,v\rangle$.
\item[(c)]
$h(x,v)=\langle x,v\rangle \Longleftrightarrow (x,v)\in \mathcal{G}(S)$.
\end{enumerate}

We will make use, in particular, of several representative functions. These are as follows.
\begin{enumerate}
\item The Fitzpatrick function $F_{S}:(x,y) \mapsto \sup_{(z,w) \in \mathcal{G}(S)} \left( \langle z-x,y-w \rangle +\langle x,y\rangle \right)$ is the smallest member of $\mathcal{H}(S)$; see Fitzpatrick's 1998 paper \cite[Theorem~3.7]{Fitzpatrick88}.
\item The largest member of $\mathcal{H}(S)$ we denote by $\sigma_{S}$; it may be computed using the identities in Fact~\ref{fact:handy}\ref{handy:sigmafitz}.
\item The Fenchel--Young representative $f\oplus f^\ast \in \mathcal{H}(\partial f)$, where $f\in \Gamma_0(X)$ and $f\oplus f^\ast\colon X\times X^\ast\to \mathbb{R}_{\infty}$ is defined by
\begin{equation}
\forall (x,v)\in X\times X^\ast,\quad f\oplus f^\ast(x,v) :=f(x) +f^\ast(v).
\end{equation}
\end{enumerate}

The Fitzpatrick function has proven quite useful in monotone operator theory; see, for example, \cite{BC17,Borwein2011b}.

\begin{fact}\label{fact:handy}  
Let $S:X\To X^*$ be a maximally monotone operator and $X$ a real Banach space. We have the following characterizations of $\sigma_S$ and $F_S^*$.
\begin{enumerate}
\item\label{handy:sigma} From \cite[Equation~(33)]{bs2002}, we have that
\begin{equation}
\epi \sigma_S = \overline{\rm co}\left(\epi \left(\pi + \iota_{G(S)}\right)\right), \quad\text{where~} \pi: (p,p^*) \mapsto \langle p,p^* \rangle
\end{equation}
is the duality product defined in $X\times X^*$, and $\overline{\rm co}(A)$ is an abbreviation for the closure of the convex hull of a set $A$.
\item\label{handy:fitz} From \cite[Proposition~10.56]{BC17}\footnote{Note that the setting in the exposition \cite{BC17} is a Hilbert space, although the veracity of \ref{handy:fitz} in a Banach space follows from \ref{handy:sigma} and \ref{handy:sigmafitz}. The setting of \cite{bs2002} is a Banach space.}, we have that $F_S = \left(\iota_{G(S^{-1})}+ \pi \right)^*$.
\item\label{handy:sigmafitz} From \cite[Equation~(39)]{bs2002}, we have that $\sigma_{S}(x,y) = F_{S}^*(y,x)$.
\item\label{handy:sums} From \cite[Corollary~4.2]{bs2002}, if $h$ is convex and lower semicontinuous on $X \times X^*$ and $F_S \leq h \leq \sigma_S$, then $h \in \mathcal{H}(S)$.
\end{enumerate}
The astute reader will notice that \ref{handy:sigmafitz} may be obtained by combining \ref{handy:sigma} and \ref{handy:fitz}, since
\begin{align*}
\left(\iota_{\mathcal{G}(S)}+ \pi \right)(y,x) &=\begin{cases}
\infty & \text{if~} (y,x) \notin \mathcal{G}(S),\\
\langle y,x\rangle & \text{otherwise}
\end{cases}\\
&=\begin{cases}
\infty & \text{if~} (x,y) \notin \mathcal{G}(S^{-1}),\\
\langle x,y \rangle & \text{otherwise}
\end{cases}\\ 
&= \left(\iota_{\mathcal{G}(S^{-1})}+ \pi \right)(x,y).
\end{align*}
Additionally, \ref{handy:sums} is quite pleasing, because it admits as representative functions the convex combinations of other representative functions.
\end{fact}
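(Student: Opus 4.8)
The plan is to assemble the four assertions from the cited references and then to check that the cross-references among them form no logical cycle. Parts \ref{handy:sigma} and \ref{handy:sums} require nothing beyond citation: \ref{handy:sigma} is \cite[Equation~(33)]{bs2002} and \ref{handy:sums} is \cite[Corollary~4.2]{bs2002}, both proved there in the Banach space generality we need, so I would simply quote them.

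For \ref{handy:fitz}, in a Hilbert space the identity $F_S = \left(\iota_{\mathcal{G}(S^{-1})} + \pi\right)^*$ is \cite[Proposition~10.56]{BC17}. To obtain it for a general Banach space, I would start from the definition $F_S(x,y) = \sup_{(z,w)\in\mathcal{G}(S)}\left(\langle z-x,y-w\rangle + \langle x,y\rangle\right)$, rewrite the summand as $\langle z,y\rangle - \langle z,w\rangle - \langle x,w\rangle + \langle x,w\rangle + \ldots$ so that it is recognizably $\langle (x,y),(w,z)\rangle_{\text{pairing}} - \pi(w,z)$ over $(w,z)\in\mathcal{G}(S^{-1})$, and then read off the supremum as a Fenchel conjugate of $\iota_{\mathcal{G}(S^{-1})} + \pi$. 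The argument-swap that legitimizes passing from $\mathcal{G}(S)$ to $\mathcal{G}(S^{-1})$ is precisely the identity $\left(\iota_{\mathcal{G}(S)}+\pi\right)(y,x) = \left(\iota_{\mathcal{G}(S^{-1})}+\pi\right)(x,y)$ already displayed after the statement. Alternatively, and perhaps more cheaply, \ref{handy:fitz} in the Banach case follows by combining \ref{handy:sigma} with \ref{handy:sigmafitz}, as noted in the footnote.

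For \ref{handy:sigmafitz}, the clean route is to cite \cite[Equation~(39)]{bs2002} directly; I would do this so that the dependency chain is acyclic (\ref{handy:sigma} and \ref{handy:sigmafitz} are independent citations, and \ref{handy:fitz} in the Banach case may then lean on them). The consistency check advertised after the statement — deriving $\sigma_S(x,y) = F_S^*(y,x)$ from \ref{handy:fitz} by conjugating, using that $\left(\iota_{\mathcal{G}(S^{-1})}+\pi\right)^{**}$ is the norm $\times$ weak$^*$ closed convex hull and comparing epigraphs with \ref{handy:sigma} — I would keep in the surrounding remark rather than the proof. The main obstacle here is hygiene rather than depth: one must confirm the cross-references are genuinely non-circular, and one must keep the non-reflexive functional analysis straight, namely the weak$^*$ lower semicontinuity in condition (a) of a representative function, the canonical identification of $X$ inside $X^{**}$ that is implicit in writing $F_S^*(y,x)$ (since $F_S^*$ a priori lives on $X^*\times X^{**}$), and the fact that biconjugation recovers the closed convex hull only with respect to the appropriate topology. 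All of these points are settled in \cite{bs2002}, so the write-up reduces to citing the relevant equations and spelling out the involution identity.
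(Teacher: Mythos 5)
Your proposal matches the paper's treatment: this Fact is established by citation assembly from \cite{bs2002} (Banach setting) and \cite{BC17}, with the same care you describe about non-circularity --- the paper's footnote obtains \ref{handy:fitz} in Banach spaces from \ref{handy:sigma} and \ref{handy:sigmafitz}, while the displayed argument-swap identity $\left(\iota_{\mathcal{G}(S)}+\pi\right)(y,x)=\left(\iota_{\mathcal{G}(S^{-1})}+\pi\right)(x,y)$ is offered only as a consistency observation. Your optional direct derivation of \ref{handy:fitz} from the definition of $F_S$ as a Fenchel conjugate is correct and standard, but it is not needed beyond what the paper already records.
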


\subsection{A new ``generalized Bregman'' distance between point-to-set operators}\label{ss:generalizesbregman}

From now on, we assume that $S\colon X\rightrightarrows X^\ast$ is a maximally monotone operator, $h\in \mathcal{H}(S)$, and $T\colon X\rightrightarrows X^\ast$. Following \cite[Definition~3.1]{BM-L18}, for fixed $(x,y)\in \dom S\times \dom T$, we define 
\begin{subequations}\label{D}
\begin{align}
\mathcal{D}_T^{\flat,h}(x,y) &:=\inf_{v\in Ty}\left( h(x,v)-\langle
x,v\rangle \right), \label{Dsharp}\\ 
\mathcal{D}_T^{\sharp,h}(x,y) &:=\sup_{v\in Ty}\left( h(x,v)-\langle
x,v\rangle \right).\label{Dflat}
\end{align}
\end{subequations}
If $y\not\in \dom T$, then $\mathcal{D}_T^{\flat ,h}(x,y) =\mathcal{D}_T^{\sharp,h}(x,y):=+\infty $ for every $x\in X$. If $x\not\in \dom S$, then $\mathcal{D}_T^{\flat,h}(x,y) =\mathcal{D}_T^{\sharp,h}(x,y) :=+\infty$ for every $y\in X$. 
When $T$ is point to point, we simply write $\mathcal{D}_T^h :=\mathcal{D}_T^{\flat,h} =\mathcal{D}_T^{\sharp,h}.$ 

For our examples, $T= S = \partial f$ is point-to-point on $\inte \dom f$ in which case we simply write $\mathcal{D}_{h}$. Additionally, when employing a specific representative function, we will use the name of the representative function used in place of $h$. If a distance is of the form \eqref{Dflat} or \eqref{Dsharp} we call it a \emph{generalized Bregman distance} or \emph{GBD} for short. The GBD specializes to the Bregman distance under certain circumstances, which we now recall. To a proper and convex function $f\colon X\to \mathbb{R}_{\infty}$, we associate two Bregman distances (see \cite{Kiw97}) defined by
\begin{subequations}\label{d:Bregman}
\begin{align}
\mathcal{D}_f^\flat(x,y) :=& f(x)-f(y)+\inf_{v \in \partial f(y)} \langle y-x,v \rangle \label{d:Bregman_flat}\\
\text{and}\quad \mathcal{D}_f^\sharp(x,y) :=& f(x)-f(y)+\sup_{v \in \partial f(y)} \langle y-x,v \rangle.\label{d:Bregman_sharp}
\end{align}
\end{subequations}  

Burachik and Mart\'inez-Legaz observed that the GBD specializes to the Bregman distance in the case where the Fenchel--Young representative distance is used. The following proposition fills a minor omission from \cite[Proposition~3.5]{BM-L18}, namely that the Fitzpatrick distance specializes to the Bregman distance under the mild condition that $(x,y) \notin (\dom f \setminus \dom \partial f) \times \dom \partial f$. In the case when $\dom f \setminus \dom \partial f = \varnothing$, they are everywhere equal. We will see later in an example that when $f$ is the Boltzmann--Shannon entropy \eqref{def:entropy}, we have $\dom f \setminus \dom \partial f = \{0\}$, and the two distances fail to be equal on the set $\{(0,y) | \; y>0 \}$.

\begin{proposition}[The GBD generalizes the Bregman distance]\label{prop:fitz_bregman}
Let $f\in \Gamma_0(X)$. Then
\begin{equation}
\mathcal{D}_{f \oplus f^\ast}^{\flat}(x,y) = \mathcal{D}_f^\flat(x,y)
\quad\text{and}\quad
\mathcal{D}_{f \oplus f^\ast}^{\sharp}(x,y) = \mathcal{D}_f^\sharp(x,y)
\end{equation}
whenever $(x,y)\notin (\dom f\setminus \dom\partial f)\times \dom\partial f$.
\end{proposition}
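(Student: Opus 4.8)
The plan is to unwind both sides using the Fenchel--Young equality and to handle the definition-of-$\infty$ conventions carefully at the boundary. Recall that $f \oplus f^\ast(x,v) = f(x) + f^\ast(v)$, so for fixed $(x,y)$ with $y \in \dom\partial f$ we have, for each $v \in \partial f(y)$,
\begin{equation*}
(f\oplus f^\ast)(x,v) - \langle x,v\rangle = f(x) + f^\ast(v) - \langle x,v\rangle.
\end{equation*}
Since $v \in \partial f(y)$ and $f$ is convex and proper (so $f \in \Gamma_0(X)$ guarantees the Fenchel--Young equality applies), $f^\ast(v) = \langle y,v\rangle - f(y)$; substituting gives $f(x) - f(y) + \langle y - x, v\rangle$. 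Taking the infimum (resp.\ supremum) over $v \in \partial f(y)$ on both sides yields exactly $\mathcal D_f^\flat(x,y)$ (resp.\ $\mathcal D_f^\sharp(x,y)$) as defined in \eqref{d:Bregman}, provided $x \in \dom S = \dom\partial f \subseteq \dom f$ and $f(x) \in \RR$ so that the additive splitting $f(x) - f(y)$ is legitimate.

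First I would fix the trivial cases from the domain conventions. If $y \notin \dom\partial f$, then by the stated conventions $\mathcal D_{f\oplus f^\ast}^{\flat,h}(x,y) = +\infty$ (since $y \notin \dom T$ with $T = \partial f$) and likewise $\mathcal D_f^\flat(x,y) = +\infty$ because the infimum over the empty set $\partial f(y)$ is $+\infty$ (and symmetrically for $\sharp$ with a supremum over $\varnothing$, which is $-\infty$ by the usual convention — here one should note the paper's formula \eqref{d:Bregman_sharp} also yields $+\infty$ only if one adopts the matching convention; but this case is excluded anyway once we restrict to $y \in \dom\partial f$, so it is harmless). If $x \notin \dom S = \dom\partial f$ but we are \emph{not} in the excluded rectangle, i.e.\ $x \notin \dom f \setminus \dom\partial f$, then $x \notin \dom f$, so $f(x) = +\infty$; then the GBD is $+\infty$ by convention, and the right-hand side $\mathcal D_f^\flat(x,y) = f(x) - f(y) + \cdots = +\infty$ as well (using $f(y) \in \RR$ since $y \in \dom\partial f \subseteq \dom f$). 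So both sides are $+\infty$ and agree.

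The remaining and main case is $x \in \dom f$ and $y \in \dom\partial f$; here the hypothesis $(x,y)\notin(\dom f\setminus\dom\partial f)\times\dom\partial f$ is automatically satisfied, and we also need $x \in \dom S = \dom\partial f$ — but wait, this is precisely why the excluded rectangle appears: the GBD requires $x \in \dom S = \dom \partial f$, whereas the Bregman distance only requires $x \in \dom f$. So I would split this case into (i) $x \in \dom\partial f$, where the computation of the first paragraph goes through verbatim and both sides equal $f(x) - f(y) + \inf_v\langle y-x,v\rangle$ (resp.\ $\sup_v$), all terms finite; and (ii) $x \in \dom f \setminus \dom\partial f$, which is exactly the rectangle excluded by hypothesis, so nothing to prove. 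The main obstacle is not analytic depth but bookkeeping: one must be meticulous that $f^\ast(v)$ is finite and equals $\langle y,v\rangle - f(y)$ exactly when $v \in \partial f(y) \neq \varnothing$ (which requires $f \in \Gamma_0$, giving the Fenchel--Young \emph{equality}, not merely the inequality), and one must confirm that the $+\infty$ conventions on the GBD side and the empty-$\inf$/$\sup$ conventions on the Bregman side line up in every boundary configuration — the statement's hypothesis is precisely the minimal condition that makes them line up. I would close by remarking that when $\dom f \setminus \dom\partial f = \varnothing$ the excluded rectangle is empty, so equality holds on all of $X \times X$.
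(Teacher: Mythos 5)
Your proposal is correct and follows essentially the same route as the paper's proof: dispose of the boundary configurations ($y\notin\dom\partial f$, and $x\notin\dom f$) by the $+\infty$ conventions, observe that $x\in\dom f\setminus\dom\partial f$ with $y\in\dom\partial f$ is exactly the excluded rectangle, and in the main case $x,y\in\dom\partial f$ substitute the Fenchel--Young equality $f^\ast(v)=\langle y,v\rangle-f(y)$ for $v\in\partial f(y)$ and take the infimum (resp.\ supremum). Your extra remarks on the empty-$\sup$ convention and on why the excluded rectangle is needed are consistent with, and slightly more explicit than, the paper's argument.
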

\begin{proof}
If $y\notin \dom\partial f$, then $\mathcal{D}_{f \oplus f^\ast}^{\flat}(x,y) =\mathcal{D}_f^\flat(x,y) =\infty$. If $y\in \dom\partial f$ and $x\notin \dom f$, we also have that $\mathcal{D}_{f \oplus f^\ast}^{\flat}(x,y) =\mathcal{D}_f^\flat(x,y) =\infty$. 

It suffices to assume that $y\in \dom\partial f$ and $x\in \dom\partial f\subseteq \dom f$. Then, since $f(y) +f^\ast(v) =\langle y,v \rangle$ for all $v\in \partial f(y)$, we derive that
\begin{subequations}
\begin{align}
\mathcal{D}_{f \oplus f^\ast}^{\flat}(x,y) 
&=\inf_{v\in \partial f(y)} \left(f(x) +f^\ast(v) -\langle x,v \rangle \right)\\
&=\inf_{v\in \partial f(y)} \left(f(x) -f(y) +\langle y,v \rangle -\langle x,v\rangle \right)\\
&= f(x) -f(y) +\inf_{v\in \partial f(y)} \langle y-x,v \rangle =\mathcal{D}_f^\flat(x,y).
\end{align}
\end{subequations}
Similarly, $\mathcal{D}_{f \oplus f^\ast}^{\sharp}(x,y) =\mathcal{D}_f^\sharp(x,y)$. The conclusion follows.
\end{proof}

We recall now the following results regarding the lower semicontinuity of the left and right distances; these apply to each of our computed examples.

\begin{lemma}[{\cite[Lemma~3.17]{BM-L18}}]
\label{lem:leftlsc} 
Let $y \in \dom T$. Then the following hold:
\begin{enumerate}
\item 
The function $\mathcal{D}_T^{\flat, h}(\cdot,y): X \rightarrow \RR_\infty$ is lsc at every $x \in \inte \dom (S)$ with respect to the strong topology in $X$ provided that $Tz$ is weakly closed for any $z$ in its domain; 
\item\label{lem:leftlsc2} The function $\mathcal{D}_T^{\sharp,h}(\cdot,y) : X \rightarrow \RR_\infty$ is lsc at every $x \in \dom (S)$ with respect to the strong topology in $X$.
\end{enumerate}
\end{lemma}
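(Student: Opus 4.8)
Let me think about how to prove lower semicontinuity of $\mathcal{D}_T^{\flat,h}(\cdot,y)$ and $\mathcal{D}_T^{\sharp,h}(\cdot,y)$.

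For part (ii), the sup version: $\mathcal{D}_T^{\sharp,h}(x,y) = \sup_{v\in Ty}(h(x,v) - \langle x,v\rangle)$. For each fixed $v \in Ty$, the map $x \mapsto h(x,v) - \langle x,v\rangle$ is... well, $h(\cdot,v)$ is lsc (from condition (a) on $h$, since $h$ is norm×weak* lsc, hence in particular lsc in the first argument for fixed second argument in the strong topology), and $x \mapsto -\langle x,v\rangle$ is continuous (linear, bounded). So each $x \mapsto h(x,v)-\langle x,v\rangle$ is strongly lsc on $X$. The supremum of lsc functions is lsc. That handles part (ii) immediately — and note this works at every $x \in \dom S$ (actually every $x \in X$, but the statement restricts to $\dom S$ presumably because outside $\dom S$ the value is $+\infty$ by convention and we'd need to check lsc there too; actually $+\infty$ constant is lsc, and the issue is whether approaching $\dom S$ from outside the value stays $+\infty$ — hmm, but $h$ could be finite there... actually the convention $\mathcal{D} = +\infty$ for $x \notin \dom S$ is imposed separately, so lsc at boundary points of $\dom S$ from outside needs care; but the statement only claims lsc AT points $x \in \dom S$, and lsc at an interior-of-domain-type point just needs $\liminf_{x' \to x} \mathcal{D}(x',y) \geq \mathcal{D}(x,y)$, which the sup-of-lsc argument gives since for $x' \notin \dom S$ the value is $+\infty \geq$ anything). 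So part (ii) is essentially: sup of strongly-lsc functions is strongly-lsc.

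For part (i), the inf version: $\mathcal{D}_T^{\flat,h}(x,y) = \inf_{v\in Ty}(h(x,v) - \langle x,v\rangle)$. Infima of lsc functions need not be lsc, so this is the real obstacle. Here I'd want to use the hypothesis that $Ty$ is weakly closed and the restriction to $x \in \inte \dom S$. The idea: take $x_n \to x$ strongly with $x \in \inte\dom S$, and suppose $\liminf_n \mathcal{D}_T^{\flat,h}(x_n,y) = L < \infty$. Pass to a subsequence attaining the liminf, and for each $n$ pick (nearly) optimal $v_n \in Ty$ with $h(x_n,v_n) - \langle x_n,v_n\rangle \leq L + \varepsilon_n$. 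I want to extract a weakly* convergent subsequence $v_n \rightharpoonup^* v \in Ty$ (using weak* closedness of $Ty$ — though in a general Banach space weak* closed does not give weak* compactness of bounded sets unless... well, bounded sets ARE weak*-relatively-compact by Banach–Alaoglu, so I need $\{v_n\}$ bounded). Boundedness of $\{v_n\}$ should follow from $x \in \inte\dom S$ together with local boundedness of the representative / of $S$ near interior points of the domain — this is where the interior hypothesis is essential, analogous to how maximally monotone operators are locally bounded in the interior of their domain, and $h(x,\cdot) \geq \langle x,\cdot\rangle$ with $h(\cdot,v)$ bounded above near $x$ forcing a coercivity-type bound on $v$. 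Then lower semicontinuity of $h$ in the norm×weak* topology gives $h(x,v) - \langle x,v\rangle \leq \liminf_n (h(x_n,v_n) - \langle x_n,v_n\rangle) = L$, and since $v \in Ty$ this yields $\mathcal{D}_T^{\flat,h}(x,y) \leq L$, as desired.

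The hard part, as indicated, is establishing boundedness of the sequence $\{v_n\}$ of (near-)minimizers, which is precisely where $x \in \inte\dom S$ enters: one needs a local-boundedness argument showing that along a strongly convergent sequence $x_n \to x \in \inte\dom S$, the (near-)optimal $v_n$ cannot escape to infinity, because $h(\cdot,v)$ being a representative function of a maximally monotone operator controls the growth of $v$ in terms of how far $x$ sits inside $\dom S$. Once boundedness is in hand, Banach–Alaoglu supplies a weak* cluster point, weak* closedness of $Ty$ keeps it in $Ty$, and joint lower semicontinuity of $h$ closes the argument. Since this is the result quoted verbatim from \cite[Lemma~3.17]{BM-L18}, I would in practice cite it; the sketch above is the route one would take to reprove it.
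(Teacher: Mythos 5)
First, a structural note: the paper never proves this lemma; it is imported verbatim from \cite[Lemma~3.17]{BM-L18} with only the citation as justification, so there is no internal argument to compare against and your sketch has to be judged on its own merits. On those merits, your treatment of part (ii) is complete and correct: for each fixed $v\in Ty$ the map $x\mapsto h(x,v)-\scal{x}{v}$ is strongly lsc because $h$ is norm$\,\times\,$weak$^*$ lsc and $\scal{\cdot}{v}$ is norm continuous, a supremum of lsc functions is lsc, and the $+\infty$ convention off $\dom S$ only increases the function, so lower semicontinuity at points of $\dom S$ is unaffected.

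For part (i) you have identified the right strategy (near-minimizers $v_n\in Ty$, a weak$^*$ cluster point, joint lsc of $h$), but the decisive step is exactly the one you leave as an assertion: boundedness of $(v_n)$. Saying it ``should follow from local boundedness of the representative / of $S$'' is not yet an argument, and it is the only place where $x\in\inte\dom S$ and maximal monotonicity of $S$ actually enter. The way to fill it is: since $h\in\mathcal{H}(S)$ one has $h\geq F_S$, hence $h(x_n,v_n)-\scal{x_n}{v_n}\geq\scal{z-x_n}{v_n-w}$ for every $(z,w)\in\mathcal{G}(S)$; because $S$ is maximally monotone and $x\in\inte\dom S$, there exist $\rho>0$ and $M>0$ such that every $z\in\ball{x}{2\rho}$ lies in $\dom S$ with $\sup\{\|w\|_\ast: w\in Sz\}\leq M$; choosing $z=x+\rho u$ with $u$ a unit vector nearly norming $v_n$, $w\in Sz$, and $n$ large enough that $\|x_n-x\|\leq\rho/2$, the displayed inequality gives $L+\varepsilon_n\geq\tfrac{\rho}{2}\|v_n\|_\ast-c$ for a constant $c$ depending only on $\rho$ and $M$, whence $(v_n)$ is bounded. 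Without this estimate the extraction of a cluster point, and with it the whole proof of (i), does not get off the ground. A second loose end is topological: Banach--Alaoglu produces a weak$^*$ cluster point (via subnets, in general), so to conclude $v\in Ty$ you need the images $Tz$ closed in the topology you are actually using; your sketch slides between ``weakly closed'' and ``weak$^*$ closed,'' which coincide only in the reflexive case, whereas the lemma is stated for a general Banach space, so a complete write-up must reconcile the closedness hypothesis with the compactness tool (the final step, $\liminf_n\bigl(h(x_n,v_n)-\scal{x_n}{v_n}\bigr)\geq h(x,v)-\scal{x}{v}$ using norm$\,\times\,$weak$^*$ lsc of $h$ and strong-times-bounded-weak$^*$ convergence of the pairing, is fine as you state it).
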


\begin{lemma}[{\cite[Lemma~3.18]{BM-L18}}]
\label{lem:rightlsc} 
Supose that $T$ is locally bounded in the interior of its domain and that the graph of $T$ is closed with respect to the strong-weak topology. Fix $y \in \inte\dom T$ and $x \in \dom S$. Then the function $\mathcal{D}_T^{\flat, h} : X \rightarrow \RR_{\infty}$ is lsc at $y$ with respect to the strong topology in $X$.
\end{lemma}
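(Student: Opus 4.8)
The plan is to verify the net (in particular, sequential) characterisation of lower semicontinuity directly. Fix a sequence $y_n\to y$ in the strong topology; we must show $\liminf_n \mathcal{D}_T^{\flat,h}(x,y_n)\ge \mathcal{D}_T^{\flat,h}(x,y)$. By property (b) of a representative function, $h(x,v)-\langle x,v\rangle\ge 0$ for every $v$, so every term is nonnegative and, passing to a subsequence, we may assume $\mathcal{D}_T^{\flat,h}(x,y_n)\to \ell$ for some $\ell\in[0,+\infty]$; if $\ell=+\infty$ there is nothing to prove, so assume $\ell<\infty$. Since $y\in\inte\dom T$ and $y_n\to y$, we have $y_n\in\inte\dom T$ (hence $Ty_n\ne\varnothing$) for $n$ large, so $\mathcal{D}_T^{\flat,h}(x,y_n)<\infty$ is a genuine infimum over a nonempty set of reals, and we may choose near-minimisers $v_n\in Ty_n$ with $h(x,v_n)-\langle x,v_n\rangle\le \mathcal{D}_T^{\flat,h}(x,y_n)+\tfrac1n$ (in particular $h(x,v_n)<\infty$).

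Next I would produce a limit point of $(v_n)$ and identify it. Local boundedness of $T$ at $y\in\inte\dom T$ gives a neighbourhood $U$ of $y$ with $T(U)$ bounded in $X^\ast$; since $y_n\in U$ eventually, the tail of $(v_n)$ lies in a fixed ball of $X^\ast$, which is weak$^\ast$-compact by Banach--Alaoglu. Hence there is a subnet $(v_{n_\alpha})$ converging to some $\bar v\in X^\ast$, and along this subnet $y_{n_\alpha}\to y$ strongly while $\tfrac1{n_\alpha}\to 0$. Closedness of $\gra T$ in the strong $\times$ weak topology on $X\times X^\ast$ then yields $(y,\bar v)\in\gra T$, i.e.\ $\bar v\in Ty$. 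Finally, $h$ is norm $\times$ weak$^\ast$ lsc, so the slice $h(x,\cdot)$ is weak$^\ast$ lsc, and $v\mapsto\langle x,v\rangle$ is weak$^\ast$ continuous; passing to the limit along the subnet,
\[
\mathcal{D}_T^{\flat,h}(x,y)\le h(x,\bar v)-\langle x,\bar v\rangle\le \liminf_\alpha\left(h(x,v_{n_\alpha})-\langle x,v_{n_\alpha}\rangle\right)\le \liminf_\alpha\left(\mathcal{D}_T^{\flat,h}(x,y_{n_\alpha})+\tfrac1{n_\alpha}\right)=\ell,
\]
which is exactly the desired inequality, using $\bar v\in Ty$ for the first step.

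I expect the main obstacle to be reconciling the several topologies on $X^\ast$ in play: the representative function is weak$^\ast$ lsc, the graph of $T$ is assumed closed in the strong $\times$ weak topology, and Banach--Alaoglu naturally supplies weak$^\ast$-convergent subnets. One must make sure that ``weak'' in the graph hypothesis is read so that the weak$^\ast$-convergent subnet of the $v_n$ also converges in that sense — this is automatic when $X$ is reflexive, and in particular in the finite-dimensional setting of our examples, where nets may additionally be replaced by sequences and the boundedness argument produces honest subsequences. A comparatively minor point is that only the tail of $(v_n)$ is guaranteed to lie in a fixed ball, which is enough since the subnet eventually indexes into that tail.
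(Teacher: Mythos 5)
The paper does not actually prove this lemma; it is quoted from \cite{BM-L18} (Lemma~3.18), so there is no internal proof to compare against. Your argument is the natural one underlying that result and it is correct: near-minimisers $v_n\in Ty_n$, local boundedness at $y\in\inte\dom T$ plus Banach--Alaoglu to extract a weak$^*$-convergent subnet, graph closedness to conclude $\bar v\in Ty$, and the norm $\times$ weak$^*$ lower semicontinuity of $h$ together with weak$^*$ continuity of $\langle x,\cdot\rangle$ to pass to the limit against the infimum defining $\mathcal{D}_T^{\flat,h}(x,y)$. The one delicate point --- Alaoglu yields weak$^*$ convergence while the closedness hypothesis is stated for the strong--weak topology --- you have identified correctly; it is immaterial when $X$ is reflexive (the setting in which these distances are analysed in Section~\ref{sec:properties}, and in which weak and weak$^*$ coincide), so your proof is complete in the intended generality.
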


\begin{remark}[The lower closed distance]\label{rem:lsc}
Notice that in Lemma~\ref{lem:leftlsc}\ref{lem:leftlsc2}, $\mathcal{D}_T^{\star,h}(\cdot,y)$ may not be lower semicontinuous at $x \in \overline{\dom} \partial f \setminus \dom \partial f$, a case we will encounter in our examples. Notice also that in Lemma~\ref{lem:rightlsc}, for $y \notin \inte\dom T$, the distance may \emph{not} be lower semicontinuous with respect to the second variable, a phenomenon we will encounter in our examples. 

For these two reasons, we also introduce the notion of the \emph{lower closed GBD}, denoted by $\overline{\mathcal{D}}_T^{\star, h}$, which satisfies
\begin{equation}
\epi \overline{\mathcal{D}}_T^{\star, h} = \overline{\epi} \mathcal{D}_T^{\star, h},
\end{equation}
where $\star$ may be either $\flat$ or $\sharp$. The lower closed GBD is the lower semicontinuous regularization of the function $\mathcal{D}_T^{\star, h}$, as described in \cite{RW}; its direct formula is given by
$$
\overline{\mathcal{D}}_T^{\star, h}(x,y) := \underset{(x',y') \rightarrow (x,y)}{\liminf} {\mathcal{D}}_T^{\star, h}(x',y').
$$
The lower closed distances $\overline{\mathcal{D}}_{F_S}$, $\overline{\mathcal{D}}_{\sigma_{S}}$, and $\overline{\mathcal{D}}_{f\oplus f^\ast}$ are defined analogously.
\end{remark}

\section{How to compute generalized Bregman distances}\label{sec:examples}

\begin{example}[Energy]\label{ex:dist:energy}
Let $f: x \mapsto \frac{1}{2}x^2$ be the \emph{energy}. If we have $h_{\Id}$ as the Fitzpatrick function for $\partial f=\Id$, then our GBD distance is 
\begin{equation}
\mathcal{D}_{F_{\Id}}(x,y)=\frac14(x-y)^2,
\end{equation}
which is equivalent to a scaled version of the usual Moreau distance. On the other hand, the largest element of $\mathcal{H}(\Id)$ is just
\begin{equation}
\sigma_{\Id} (x,y) = \begin{cases} x^2 & x=y,\\
\infty & \text{otherwise}.		
\end{cases}
\end{equation}
One can obtain this result by computing $F_{\Id}^*$ straight from the definition of the conjugate and using Fact~\ref{fact:handy}\ref{handy:sigmafitz}. One can also obtain this result by using Fact~\ref{fact:handy}\ref{handy:sigma}, because the graph of $\Id$ is simply the diagonal. The corresponding distance is
\begin{equation}
\mathcal{D}_{\sigma_{\Id}} (x,y) = \begin{cases} 0 & x=y,\\
\infty & \text{otherwise}.		
\end{cases}
\end{equation}
\end{example}

In \cite{BDL17}, the asymptotic properties of Bregman envelopes are illustrated using Bregman distances constructed from three functions. One of these was the \emph{energy} from Example~\ref{ex:dist:energy}, for which the Bregman proximity operator and envelope specialize to the Moreau case. While the choice of representative function $F_{\Id}$ is equivalent to the Moreau case up to a change in parameter, notice that the example $\mathcal{D}_{\sigma_{\Id}}$ illustrates that this is \emph{not} the case for \emph{any} choice of representative function. This is an important distinction in our context.

\subsection{Boltzmann--Shannon entropy}\label{subsec:BSentropy}

Another function whose translated version was early considered by Censor and Lent \cite{Censor1981}, and whose Bregman envelopes are studied in \cite{BDL17}, is the (negative) Boltzmann--Shannon entropy: 
\begin{equation}\label{def:entropy}
\ent:\RR \rightarrow \RR: \quad x\mapsto \begin{cases} x\log x -x & \text{if} \; x > 0, \\
0 & \text{if} \; x=0,\\
\infty & \text{otherwise.}\end{cases} \end{equation}

The Boltzmann-Shannon entropy is particularly important and natural to consider, because its derivative is $\log$, its conjugate is $\ent^* = \exp$, and its associated Bregman distance is the Kullback--Leibler divergence,
\begin{equation}
\mathcal{D}_{\ent}:(x,y) \mapsto \begin{cases}x(\log(x)-\log(y))-x+y & \text{if}\; y >0,\\
y & \text{if}\; y >0\;\text{and}\; x= 0,\\
\infty & \text{otherwise},\end{cases}
\end{equation}
which is frequently used as a measure of distance between positive vectors in information theory, statistics, and portfolio selection. The GBD associated with the Fenchel--Young representative $\ent \oplus \ent^* \in \mathcal{H}(\log)$ is
\begin{equation}
\mathcal{D}_{\ent \oplus \ent^*}:(x,y) \mapsto \begin{cases}x(\log(x)-\log(y))-x+y & \text{if}\; x,y >0,\\
\infty & \text{otherwise}.\end{cases}
\end{equation}
Thus it may be seen that the \emph{Bregman} distance of the \emph{Boltzmann--Shannon entropy} is the special case of the \emph{GBD} for the Fenchel--Young representative of the \emph{logarithm} function, except on the set $(\dom f \setminus \dom \partial f) \times \dom \partial f = \{0\}\times \left]0,\infty \right[$ (see Proposition~\ref{prop:fitz_bregman} and Remark~\ref{rem:lsc}). Its lower closure is given by\footnote{One may rewrite the first case as $y>0,x\geq0$ and omit writing the separate case $x=0,y>0$ in \eqref{def:Bregman_lower_closure}, as long as one remembers to use the convention that $0\log(0)=0$.} 
\begin{equation}\label{def:Bregman_lower_closure}
\overline{\mathcal{D}}_{\ent \oplus \ent^*}:(x,y) \mapsto \begin{cases}x(\log(x)-\log(y))-x+y & \text{if}\; y >0,x> 0,\\
y & \text{if}\; y>0,x=0,\\
0 &\text{if}\; y=x=0,\\
\infty & \text{otherwise}\end{cases}
\end{equation}
and is shown in Figure~\ref{D:bregman}, while the Fenchel--Young representative $\ent \oplus \ent^*$ is shown in Figure~\ref{rep:bregman}.

We will consider \emph{new} distances built from the maximally monotone operator $\log$, and compare these to the known special case of the Bregman distance for the Boltzmann-Shannon entropy. The corresponding Fitzpatrick function (as computed in \cite{BMcS06}) and shown in Figure~\ref{rep:fitz} is
\begin{equation}\label{fitz:ent}
F_{\log} :(x,y) \mapsto \begin{cases}
+\infty & \text{if}\; x<0,\\
\exp(y-1) & \text{if}\; x=0,\\
xy + x\left(\W(xe^{1-y})+\frac{1}{\W(xe^{1-y})} -2 \right) & \text{if}\; x>0,
\end{cases}
\end{equation}
where $\W$ is the real principal branch of the Lambert $\W$ function that satisfies $\W(x)e^{W(x)}=x$ on $\left[-1/e,\infty \right[$. See, for example, \cite{Knuth}. Its occurrences in convex analysis and its relationship to the Boltzmann--Shannon entropy have been discussed in, for example, \cite{BL2018,BL2017,Borwein2011}.

\begin{example}[GBD $\overline{\mathcal{D}}_{F_{\log}}$]
The corresponding (closed) GBD is 
\begin{align}\label{fitz:dist}
\overline{\mathcal{D}}_{F_{\log}}: \RR_+ \times \RR_+ & \rightarrow \RR_+ \cup \{\infty\}\\
(x,y) &\mapsto \begin{cases}
\infty & \text{if}\; x<0 \;\text{or}\; y<0\; \text{or}\; (x>0 \; \text{and}\; y=0),\\
ye^{-1} & \text{if}\; x=0\; \text{and}\; y\geq 0,\\
x\left(\W\left(\frac{xe}{y} \right)+\frac{1}{\W\left(\frac{xe}{y} \right)} -2 \right) & \text{otherwise},\nonumber.
\end{cases}
\end{align}
This distance is shown in Figure~\ref{D:fitz}.
\end{example}
\begin{proof}
Combining Definitions~\ref{D} and \ref{fitz:dist} with the fact that $\dom \log = \left]0,+\infty \right[$, we have
\begin{subequations}
\begin{align}
\mathcal{D}_{F_{\log}}(x,y) &= \begin{cases} F_{\log}(x,\log(y))-
\langle x,\log(y) \rangle & \text{if~} x,y \in \dom \log,\\
+\infty & \text{otherwise}\end{cases}\\
&= \begin{cases}+\infty & \text{if~} x\leq 0 \text{~or~} y\leq 0,\\
x\log(y)+ x\left(\W(xe^{1-\log(y)})+\frac{1}{\W(xe^{1-\log(y)})} -2 \right) -x\log(y) & \text{otherwise}. \end{cases}
\end{align}
\end{subequations} 
This simplifies, by a bit of arithmetic, to the form in \eqref{fitz:dist}, except on the set $\{0\}\times\left[0,\infty \right[$. Taking the closure of the epigraph admits $\overline{\mathcal{D}}_{F_{\log}}(0,y)=ye^{-1}$. This example is particularly interesting, because we see the loss of the \emph{left} lower semicontinuity property at $0$ because $0 \in \dom f \setminus \dom \log$, and we also see the loss of the \emph{right} lower semicontinuity property because $0 \notin \inte \dom \log$; see Remark~\ref{rem:lsc}.
\end{proof}

\subsection{Computation of a difficult representative function and the conjugate of a Fitzpatrick function}\label{sec:Fitzconj}

Next we consider the case where $S=\log$ is the logarithm function on $\left]0,\infty \right[$. Even though we know the form of $F_{\log}$, it is not straightforward to compute $\sigma_{\log}$ using the equality $\sigma_{\log}(x,y)={F}_{\log}^*(y,x)$ from Fact~\ref{fact:handy}\ref{handy:sigmafitz} by subdifferentiating with the latter and solving. Instead, we use the characterization from Fact~\ref{fact:handy}\ref{handy:sigma}.

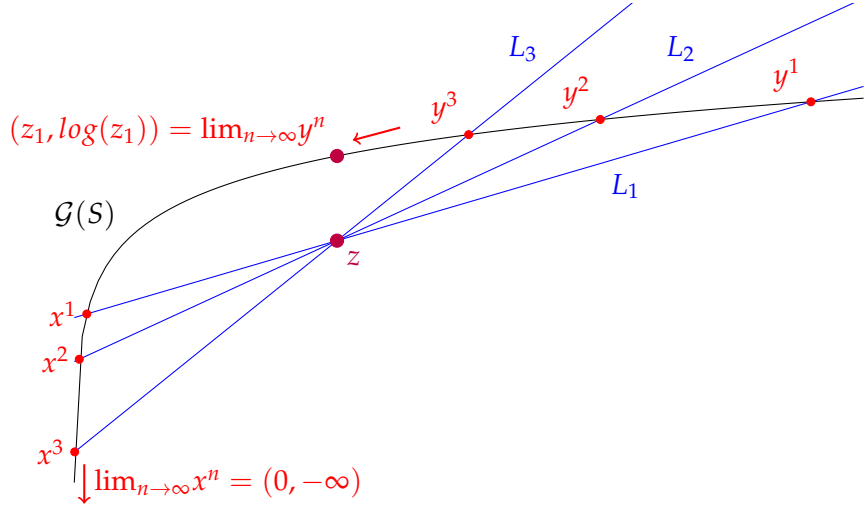
\begin{figure}
\begin{center}
\begin{tikzpicture}[scale=0.7]
\clip (-2,4.5) rectangle + (17,-10);

\draw plot [domain=0.01:15,samples=100] (\x,{ln(\x)});

\draw [blue] plot [domain=0.01:15,samples=100] (\x,{.2932*\x+-1.466)});
\draw [fill,red] (.248,-1.393) circle [radius=.075cm];
\node [left, red] at (.248,-1.393) {$x^1$};
\draw [fill,red] (14,2.639) circle [radius=.075cm];
\node [above left, red] at (14,2.639) {$y^1$};
\node [below right, blue] at (10,1.466) {$L_1$};

\draw [blue] plot [domain=0.01:15,samples=100] (\x,{.4605*\x+-2.302)});
\draw [fill,red] (0.105,-2.254) circle [radius=.075cm];
\node [left, red] at (0.105,-2.254) {$x^2$};
\draw [fill,red] (10,2.303) circle [radius=.075cm];
\node [above left, red] at (10,2.303) {$y^2$};
\node [above left, blue] at (12,3.224) {$L_2$}; 

\draw [blue] plot [domain=0.01:15,samples=100] (\x,{.806*\x+-4.030)});
\node [above left,blue] at (9,3.224) {$L_3$};
\draw [fill,red] (7.5,2.015) circle [radius=.075cm];
\node [above left, red] at (7.5,2.015) {$y^3$};
\draw [fill,red] (0.018,-4.015) circle [radius=.075cm];
\node [left, red] at (0.018,-4.015) {$x^3$};

\draw [fill,purple] (5,0) circle [radius=.125cm];
\node [below right, purple] at (5,0) {$z$};
\draw [fill,purple] (5,1.609) circle [radius=.125cm];
\node [above left, red] at (5,1.609) {$(z_1,log(z_1))={\rm lim}_{n\rightarrow \infty}y^n$};
\draw [<-,red,thick] (5.3,1.91) -- (6.2,2.15);

\draw [->,red,thick] (0.2,-4.25) -- (0.2,-5);
\node [right,red] at (0.2,-4.6) {${\rm lim}_{n \rightarrow \infty} x^n = (0,-\infty)$};

\node [above left] at (1,0) {$\mathcal{G}(S)$};
\end{tikzpicture}
\caption{Construction of sequence in proof of Theorem~\ref{thm:sigma} }\label{fig:sigmaproof}
\end{center}
\end{figure}

Recall that, for an arbitrary function $g$ and its \emph{convex hull} function ${\rm co}(g)$, the lower semicontinuous regularization or {\em lower closure}, denoted as $\overline{\rm co}(g)$, has the property
\begin{equation}\label{co}
{\rm epi}(\overline{\rm co}(g))=\overline{\rm co}({\rm epi}(g)).
\end{equation}
See, for example, \cite[Chapter~1]{RW}.

\begin{theorem}[The representative $\sigma_{\log}$]\label{thm:sigma} 
Let $T =S =\log$. Then
\begin{equation}\label{def:sigma_log}
\sigma_{\log}:(x,y) \mapsto \begin{cases}
x\log(x) &\text{if~} y \leq \log(x),\\
\infty & \text{otherwise},
\end{cases}
\end{equation}
whose graph is shown in Figure~\ref{rep:sigma}.
\end{theorem}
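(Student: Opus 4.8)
The plan is to prove this via the geometric identity in Fact~\ref{fact:handy}\ref{handy:sigma} rather than through the conjugate formula $\sigma_{\log}(x,y)=F_{\log}^\ast(y,x)$, which (as the text notes) is unwieldy with the expression \eqref{fitz:ent} in hand. Write $g:=\pi+\iota_{\mathcal{G}(\log)}$, so $\dom g=\{(x,\log x):x>0\}$ and $g(x,\log x)=x\log x$; then $\epi g$ is the union of the vertical rays $\{x\}\times\{\log x\}\times[x\log x,+\infty)$ over the graph of $\log$, and by Fact~\ref{fact:handy}\ref{handy:sigma} the goal is to show $\overline{\rm co}(\epi g)=\epi\rho$, where $\rho(x,y):=x\log x$ if $x>0$ and $y\le\log x$, and $\rho(x,y):=+\infty$ otherwise.

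For the inclusion $\overline{\rm co}(\epi g)\subseteq\epi\rho$, I would check that $\epi\rho$ is closed and convex: convexity because it is cut out by the constraints $y-\log x\le0$ and $x\log x-t\le0$ over $x>0$, both convex; closedness because a sequence in $\epi\rho$ cannot converge to a point with $x=0$ (there $\log x_n\to-\infty$ would force $y_n\to-\infty$), and the constraints pass to limits with $x>0$. Since each generator $(x,\log x,t)$ of $\epi g$ lies in $\epi\rho$, the inclusion follows. (Equivalently: a finite convex combination $\sum_i\lambda_i(a_i,\log a_i,s_i)$ of points of $\epi g$ has first coordinate $x=\sum_i\lambda_i a_i>0$, second coordinate $\sum_i\lambda_i\log a_i\le\log x$ by concavity of $\log$, and third coordinate $\ge\sum_i\lambda_i a_i\log a_i\ge x\log x$ by convexity of $u\mapsto u\log u$.)

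For the reverse inclusion $\epi\rho\subseteq\overline{\rm co}(\epi g)$, fix $(x_0,y_0,t_0)$ with $x_0>0$, $y_0\le\log x_0$, $t_0\ge x_0\log x_0$. If $y_0=\log x_0$ then $(x_0,y_0,t_0)\in\epi g$ directly. If $y_0<\log x_0$, I would build the sequence indicated in Figure~\ref{fig:sigmaproof}: take $a_n\downarrow0$ with $a_n\in(0,1)$, and for each $n$ solve $\lambda_n a_n+(1-\lambda_n)b_n=x_0$ and $\lambda_n\log a_n+(1-\lambda_n)\log b_n=y_0$ for $\lambda_n\in(0,1)$ and $b_n>0$; existence follows from the intermediate value theorem applied to $\lambda\mapsto\lambda a_n+(1-\lambda)\exp\!\bigl(\tfrac{y_0-\lambda\log a_n}{1-\lambda}\bigr)$, which runs from $e^{y_0}<x_0$ (as $\lambda\to0^+$) to $+\infty$ (as $\lambda\to1^-$). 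One then shows $\lambda_n\to0$ and $b_n\to x_0$, so the third coordinate of the corresponding convex combination, $\tau_n=\lambda_n a_n\log a_n+(1-\lambda_n)b_n\log b_n$, satisfies $(1-\lambda_n)b_n\log b_n\to x_0\log x_0$ and $\lambda_n a_n\log a_n=a_n(\lambda_n\log a_n)\to0$, because $\lambda_n\log a_n=y_0-(1-\lambda_n)\log b_n\to y_0-\log x_0$ stays bounded. Hence $(x_0,y_0,\tau_n)\in{\rm co}(\epi g)$ converges to $(x_0,y_0,x_0\log x_0)$, so that point lies in $\overline{\rm co}(\epi g)$; finally, since $\epi g$---and therefore $\overline{\rm co}(\epi g)$, as epigraphs are upward closed and this passes to the closed convex hull---is invariant under adding non-negative multiples of $(0,0,1)$, every $(x_0,y_0,t_0)$ with $t_0\ge x_0\log x_0$ is in $\overline{\rm co}(\epi g)$. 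Combining the two inclusions gives $\sigma_{\log}=\rho$.

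I expect the reverse inclusion to be the crux, and within it the genuinely delicate point is forcing the third coordinate onto the surface $t=x\log x$ exactly rather than overshooting: any convex combination of distinct points of the curve has third coordinate strictly above $x\log x$ by strict convexity of $u\mapsto u\log u$, so one really needs the ``escaping'' point $a_n\downarrow0$ with weight decaying at precisely the rate that keeps $\lambda_n\log a_n$ convergent while $\lambda_n a_n\log a_n\to0$. The one fussy sub-step is ruling out $\lambda_n\to\lambda^\ast\in(0,1]$ along subsequences---for $\lambda^\ast=1$ this uses $\tfrac{\log b_n}{b_n}\to0$ together with $(1-\lambda_n)b_n\to x_0$.
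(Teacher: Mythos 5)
Your proposal is correct and follows essentially the same route as the paper: both rest on the closed-convex-hull characterization of Fact~\ref{fact:handy}\ref{handy:sigma}, and your reverse inclusion (the crux) is exactly the paper's construction, a point escaping along the graph of $\log$ toward $(0,-\infty)$ with weight $\lambda_n\to 0$ paired with a point tending to $(x_0,\log x_0)$, just parametrized by $a_n\downarrow 0$ instead of by line slopes $\tan(\varphi_n)\to\pi/2$. The only real deviation is in the forward inclusion, where you verify directly that the candidate epigraph is closed and convex and contains $\epi g$; the paper instead splits this into a Carath\'eodory-plus-concavity argument for the region $y>\log x$ and a separate convex lsc minorant $w(t_1,t_2)=t_1\log t_1$ for the value bound, so your version is a modest streamlining of that direction.
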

\begin{proof}
Using Fact~\ref{fact:handy} together with the fact that $\mathcal{G}(S) = \left \{(z_1,\log(z_1)) | z_1 \in \left]0,\infty \right[ \right \}$ and the fact that $\log$ is a concave function, we have that $z_2 > \log (z_1)$ implies $\sigma_{\partial f}(z_1,z_2) = \infty$. Indeed, let $g:\RR^2\to \RR_{\infty}$ be defined as $g:=\pi + \iota_{\mathcal{G}(S)}$; the graph of $g$ is shown as the dark curve at the boundary of the surface in Figure~\ref{rep:sigma}. By Fact~\ref{fact:handy} we have that 
$\sigma_{\partial f}=\overline{\rm co}(g)$. If $\overline{\rm co}(g)(z_1,z_2)<\infty$ then there exists $a\in \RR$ such that $(z_1,z_2,a)\in\epi \overline{\rm co}(g)$. By \eqref{co} this is equivalent to $(z_1,z_2,a)\in\overline{\rm co}\epi (g)$. The last inclusion means that there exists a sequence $w_n:=(z_1^n,z_2^n,a_n)\in {\rm conv}\epi (g)$ such that $(z_1,z_2,a)=\lim_{n\to \infty} (z_1^n,z_2^n,a_n)$. Note that we can assume that
\[
(z_1^n,z_2^n,a_n)=\sum_{i=1}^4 \lambda_{n,i} (z_{1,i}^n,z_{2,i}^n,a_{i,n}), \hbox{ with } (z_{1,i}^n,z_{2,i}^n,a_{i,n})\in \epi (g),
\]
thanks to Carath\'{e}odory's theorem. Using the fact that $(z_{1,i}^n,z_{2,i}^n,a_{i,n})\in \epi (g)$, we have that
\[
\begin{array}{ll}
\sum_{i=1}^4 \lambda_{n,i}=1,\,\lambda_{n,i}\ge 0,& \,\forall \, i=1,\ldots,4,\\
&\\
z_1^n=\sum_{i=1}^4 \lambda_{n,i} z_{1,i}^n\,,\,& z_2^n=\sum_{i=1}^4 \lambda_{n,i} z_{2,i}^n\,,\\
&\\
z_{1,i}^n>0, &\, z_{2,i}^n=\log{z_{1,i}^n}\,, \forall \, i=1,\ldots,4,\\
\end{array}
\]
Using the above expression for $z_2^n$ and the fact that $z_{2,i}^n=\log{z_{1,i}^n}\,$, we can write
\[
z_2^n=\sum_{i=1}^4 \lambda_{n,i} z_{2,i}^n=\sum_{i=1}^4 \lambda_{n,i} \log{z_{1,i}^n}\le \log{\left( \sum_{i=1}^4 \lambda_{n,i} {z_{1,i}^n}\right)}=\log{z_1^n},
\]
where we used the fact that $\log({\cdot})$ is concave. Taking limits and using the continuity of the $\log(\cdot)$ we deduce that $z_2 \leq \log(z_1)$. This implies that, when $z_2 > \log(z_1)$ we must have $\sigma_{\partial f}(z_1,z_2) = \infty$. This shows the second part of the definition in the statement of the theorem. We proceed now to prove the first part of the definition of $\sigma_{\partial f}$. Let $z \in \RR^2$ be such that $z_1>0$ and $z_2 \leq \log(z_1)$. For any $x,y \in \mathcal{G}(S)$ that satisfy $\lambda x + (1-\lambda)y = z$ for some $\lambda \in [0,1]$, we have that $g(x)=\langle x_1, x_2 \rangle = x_1x_2$ and that $g(y)=\langle y_1 ,y_2 \rangle = y_1y_2$. Thus 
\begin{subequations}
\begin{align}
\left(x_1,x_2, \langle x_1 x_2 \rangle \right) &= (x_1,x_2,x_1x_2) \in \epi g,\\
\text{and}\quad \left(y_1,y_2, \langle y_1 y_2 \rangle \right) &= (y_1,y_2,y_1y_2) \in \epi g.
\end{align}
\end{subequations} 
From the definition of convexity,
\begin{equation}
\lambda(x_1,x_2,x_1 x_2) + (1-\lambda)(y_1,y_2,y_1y_2) \in \conv \left(\epi g\right).
\end{equation}
This is just

\begin{equation}
(z_1,z_2,\lambda x_1 x_2 + (1-\lambda) y_1 y_2) \in \conv \left(\epi g\right).
\end{equation}
Using the fact that $x,y \in \mathcal{G}(S)$, this is just
\begin{equation}
\left(z_1,z_2,\lambda x_1 \log(x_1) + (1-\lambda) y_1 \log(y_1)\right) \in \conv \left(\epi g\right).
\end{equation}
Let $(\varphi_n)_{n \in \NN} \subset \left]0,\pi/2 \right[$ be a sequence that satisfies $\lim_{n \rightarrow \infty}\varphi_n = \pi/2$. For any $\varphi_n$, we may find a line in $\RR^2$ that goes through $z$ and has slope $\tan(\varphi_n)$, which is given by
\begin{equation}
L_n := \{u \in \RR^2 \;|\; u_2 = \tan(\varphi_n)(u_1 - z_1) + z_2  \}.
\end{equation}
Now $\varphi_n \in \left]0,\pi/2 \right[$ and $z_2 \leq \log(z_1)$ guarantees that $L_n \cap \mathcal{G}(S)$ is a doubleton $\{x^n,y^n\}$ where $x_1^n < z_1$ and $y_1^n > z_1$ and $z=p_n x^n + q_n y^n$ with $q_n + p_n = 1, p_n,q_n \in [0,1]$. The construction of this sequence is shown in Figure~\ref{fig:sigmaproof}. As $\varphi_n \rightarrow \pi/2$, the slope $\tan(\varphi_n)$ of $L_n$ goes to infinity, and so we have that
\begin{subequations}
\begin{align}
\underset{n \rightarrow \infty}{\lim}& y^n = (z_1,\log(z_1))\quad \text{and}\quad \underset{n \rightarrow \infty}{\lim} x^n = (0,-\infty),\\
\text{and so}\quad \underset{n \rightarrow \infty}{\lim}& x^n \log(x^n) =0, \quad \text{and}\quad \underset{n \rightarrow \infty}{\lim} p_n = 0, \quad \text{and} \quad \underset{n \rightarrow \infty}{\lim}q_n = 1.
\end{align}
\end{subequations}
Thus we have that 
\begin{equation}
\underset{n \rightarrow \infty}{\lim}\left(z_1,z_2, p_nx^n\log(x^n)+q_ny^n\log(y^n) \right) = \left(z_1,z_2, z_1 \log(z_1)\right) \in \overline{\rm co}(\epi g)=\epi \sigma_{\log}.
\end{equation}
Thus $z_1\log(z_1)\ge \sigma_{\partial f}(z)$ for every $(z_1,z_2)$ such that $z_1>0$ and $z_2 \leq \log(z_1)$. For the converse inequality, define the function
\begin{equation}
w(t_1,t_2):=\left\{ \begin{array}{lr}
t_1\, \log{t_1}  & \hbox{ if } t_1>0,\\
0& \hbox{ if } t_1=0.
\end{array}\right.
\end{equation}
The function $w$ is convex and lsc.  It is easy to check that $w\le g$ in $\RR^2$. Therefore,
\begin{equation}
\epi w\supset \epi g.
\end{equation}
Using the fact that $w$ is convex and lsc we deduce that
\begin{equation}
\epi w=\overline{\rm co}(\epi w) \supset \overline{\rm co}(\epi g)=\epi \sigma_{\log},
\end{equation}
equivalently, $w\le \sigma_{\log}$. This implies that $z_1\log(z_1)\le \sigma_{\log}(z)$ for every $(z_1,z_2)$ such that $z_1>0$ and $z_2 \leq \log(z_1)$. Consequently, we showed that 
\begin{equation}
\sigma_{\log}(z)=z_1\log(z_1)\,,\forall (z_1,z_2) \hbox{ s.t. } z_1>0 \hbox{ and } z_2 \leq \log(z_1),
\end{equation}
which is the claim of the theorem.
\end{proof}

\begin{figure}
\begin{center}
\subfloat[{$F_{\log}$}\label{rep:fitz}]{\includegraphics[width=.3\textwidth]{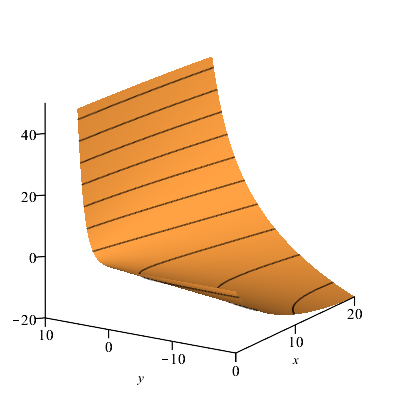}}
\subfloat[{$\ent \oplus \ent^*$}\label{rep:bregman}]{\includegraphics[width=.3\textwidth]{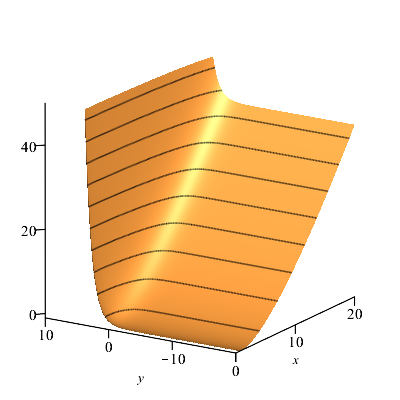}}
\subfloat[{$\sigma_{\log}$}\label{rep:sigma}]{\includegraphics[width=.3\textwidth]{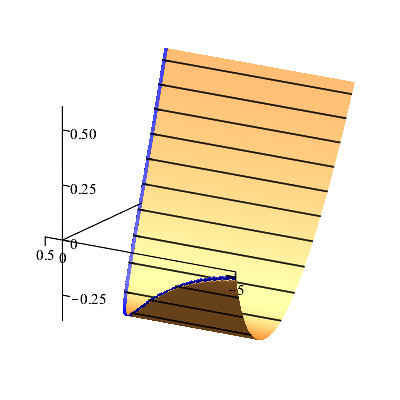}}
\end{center}
\caption{Constructing representative functions for the logarithm.}\label{fig:representativefunctions}
\end{figure}

\begin{corollary}[The conjugate of the Fitzpatrick function {$F_{\log}$} ] We have that
\begin{equation}
F_{\log}^*:(x,y) \mapsto \begin{cases}
y\log(y) &\text{if~} x \leq \log(y),\\
\infty & \text{otherwise}.
\end{cases}
\end{equation}
\end{corollary}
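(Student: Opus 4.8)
The plan is to derive the corollary directly from Theorem~\ref{thm:sigma} via the duality identity $\sigma_S(x,y) = F_S^*(y,x)$ recorded in Fact~\ref{fact:handy}\ref{handy:sigmafitz}, applied with $S = \log$. Since that identity holds for every $(x,y) \in X \times X^* = \RR \times \RR$, it can be read in reverse as $F_{\log}^*(u,v) = \sigma_{\log}(v,u)$ for all $(u,v)$, so the entire content of the corollary is a relabelling of the variables in the formula \eqref{def:sigma_log}.

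Concretely, I would first invoke Theorem~\ref{thm:sigma} to record that
\begin{equation*}
\sigma_{\log}(v,u) = \begin{cases} v\log(v) & \text{if } u \leq \log(v),\\ \infty & \text{otherwise},\end{cases}
\end{equation*}
and then apply Fact~\ref{fact:handy}\ref{handy:sigmafitz} in the form $F_{\log}^*(u,v) = \sigma_{\log}(v,u)$ to conclude immediately that $F_{\log}^*(u,v) = v\log(v)$ when $u \leq \log(v)$ and $F_{\log}^*(u,v) = \infty$ otherwise. Renaming $(u,v)$ as $(x,y)$ gives exactly the displayed formula in the statement.

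The only thing that requires any care — and it is not really an obstacle, just bookkeeping — is getting the order of the two arguments right, since $F_{\log}^*$ lives on $X^* \times X$ (or, after the natural identification in $\RR$, simply on $\RR^2$ with the roles of the coordinates swapped relative to $\sigma_{\log}$). I would make this explicit by writing out the one-line chain $F_{\log}^*(x,y) = \sigma_{\log}(y,x)$ and then substituting \eqref{def:sigma_log}, so that the reader sees precisely why the condition ``$y \leq \log(x)$'' in Theorem~\ref{thm:sigma} becomes the condition ``$x \leq \log(y)$'' here. No further estimates, limiting arguments, or properties of the Lambert $\W$ function are needed: this corollary is purely a corollary of the theorem together with Fact~\ref{fact:handy}.
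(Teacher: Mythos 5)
Your proposal is correct and is exactly the argument the paper gives: the corollary is obtained by applying the identity $\sigma_{S}(x,y)=F_{S}^{*}(y,x)$ of Fact~\ref{fact:handy}\ref{handy:sigmafitz} to the formula for $\sigma_{\log}$ in Theorem~\ref{thm:sigma} and swapping the arguments. Your explicit bookkeeping of the variable swap is a harmless elaboration of the paper's one-line proof.
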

\begin{proof}
This immediately follows from Theorem~\ref{thm:sigma} together with Fact~\ref{fact:handy}\ref{handy:sigmafitz}.
\end{proof}

\begin{remark}
In the proof of Theorem~\ref{thm:sigma}, the sequences $x^n,y^n$ may be given explicitly by
\begin{subequations}
\begin{align}
x_1^n &= \exp \left(-\W_0 \left(-\tan(\varphi_n)\exp\left(-z_1 \tan(\varphi_n)+z_2 \right) \right) \right),\\
y_1^n &= \exp \left(-\W_{-1} \left(-\tan(\varphi_n)\exp\left(-z_1 \tan(\varphi_n)+z_2 \right) \right) \right),
\end{align}
\end{subequations}
where $\W_0$ and $\W_{-1}$ are the principal and secondary real branches of the Lambert $\W$ function.
\end{remark}
Most of the analysis of Lambert $\W$ in the context of convex optimization has focused on its principal branch. However, in order to experimentally discover the true form for $\sigma_{\partial f}$ from Theorem~\ref{thm:sigma}, we had to make use of both real branches. The reason for this is that our attempts to explicitly solve the systems
\begin{subequations}
\begin{align}
\underset{y \in \RR^2}{\sup} &\left \{ \langle x,y \rangle - F_{\partial f}(y) \right \}\\
\text{or} \quad \underset{\begin{subarray}{c}
z=\lambda x + (1-\lambda)y \\
x,y \in G(\partial f),\;\; \lambda \in [0,1]
\end{subarray}}{\inf} &\left \{\lambda x_1 x_2 + (1-\lambda)y_1y_2  \right \}
\end{align}
\end{subequations}
were not successful. Seeking to compute $\sigma_{\log}$ numerically, we constructed a numerical procedure that evaluated $\lambda x_1^n x_2^n + (1-\lambda)y_1^n y_2^n$ for a finite sequence $(\varphi_n)_{n=1}^N$ and chose the smallest value to represent $\sigma_{\log}(z)$. The fast evaluation of Lambert $\W$ obviated the implementation of slower numerical routines to solve the equation system
\begin{equation}
\log(\eta) = \tan(\varphi_n) (\eta-z_1)+z_2.
\end{equation}
We observed that the smallest value was always the last value, corresponding to $\varphi_n$ nearest to $\pi/2$. Once we observed that the values were consistently approaching $z_1 \log(z_1)$ for any $z$ chosen, we ``knew'' the true form. Upon further scrutiny of the geometry, we realized that the sequences that led to the discovery also yielded the proof.

\begin{corollary}[GBD for $\sigma_{\log}$]\label{cor:sigmalog}The corresponding (closed) conjugate Fitzpatrick distance is
\begin{align}\label{eqn:Dsigma}
\overline{\mathcal{D}}_{\sigma_{\log}}: (x,y) \mapsto  \begin{cases}
x \log(x) - x\log(y) & \text{if~}\ 0< y \leq x,\\
0 & \text{if~} x=y=0,\\
\infty & \text{otherwise},
\end{cases}
\end{align}
which is shown in Figure~\ref{D:sigma}.
\end{corollary}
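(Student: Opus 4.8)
The plan is to substitute the formula for $\sigma_{\log}$ from Theorem~\ref{thm:sigma} into the definition \eqref{D} and then pass to the lower closure as in Remark~\ref{rem:lsc}. Since $T=S=\log$ is point-to-point on $\dom\log=\left]0,\infty\right[$, for $x,y>0$ we have $\mathcal{D}_{\sigma_{\log}}(x,y)=\sigma_{\log}(x,\log y)-\langle x,\log y\rangle=\sigma_{\log}(x,\log y)-x\log y$, and $\mathcal{D}_{\sigma_{\log}}(x,y)=+\infty$ whenever $x\leq 0$ or $y\leq 0$. As $\log$ is increasing, for $x,y>0$ the inequality $\log y\leq\log x$ is equivalent to $y\leq x$, so Theorem~\ref{thm:sigma} yields $\sigma_{\log}(x,\log y)=x\log x$ when $0<y\leq x$ and $+\infty$ when $0<x<y$; hence, before taking the closure,
\begin{equation*}
\mathcal{D}_{\sigma_{\log}}(x,y)=\begin{cases} x\log x-x\log y & \text{if } 0<y\leq x,\\ +\infty & \text{otherwise.}\end{cases}
\end{equation*}

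Next I would apply the direct formula $\overline{\mathcal{D}}_{\sigma_{\log}}(x,y)=\liminf_{(x',y')\to(x,y)}\mathcal{D}_{\sigma_{\log}}(x',y')$ and show that the closure changes the value only at $(0,0)$. On the open region $\{0<y<x\}$ and on the open ray $\{0<y=x\}$ the map $(x,y)\mapsto x\log(x/y)$ is continuous (and identically $0$ on the latter), so $\mathcal{D}_{\sigma_{\log}}$ is already lsc on all of $\{0<y\leq x\}$ and the closure leaves it unchanged there. For the remaining points I would compute the liminf directly: at $(x_0,0)$ with $x_0>0$, every sequence of finite-value points has $0<y_n\leq x_n$ with $y_n\to 0$, forcing $x_n\log x_n-x_n\log y_n\to+\infty$; at $(0,y_0)$ with $y_0>0$ there is no sequence of finite-value points converging there at all, since $y_n\leq x_n$ is incompatible with $x_n\to 0<y_0$; and at any point with $0<x_0<y_0$, or with $x_0<0$, or with $y_0<0$, the same incompatibility (respectively, an entire neighbourhood of $+\infty$ values) again forces the liminf to be $+\infty$.

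The one genuinely delicate case, and the main obstacle, is $(0,0)$. Here the key points are that on its effective domain $\mathcal{D}_{\sigma_{\log}}(x',y')=x'\log(x'/y')\geq 0$, because $x'>0$ and $x'/y'\geq 1$, so every cluster value of $\mathcal{D}_{\sigma_{\log}}$ at $(0,0)$ is nonnegative; and that along the diagonal $\mathcal{D}_{\sigma_{\log}}(t,t)=0\to 0$ as $t\to 0^+$, which attains the value $0$. Therefore $\overline{\mathcal{D}}_{\sigma_{\log}}(0,0)=0$. Combining the four outcomes --- the unchanged formula on $\{0<y\leq x\}$, the value $0$ at $(0,0)$, and $+\infty$ at every other point --- gives precisely \eqref{eqn:Dsigma}.
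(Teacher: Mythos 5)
Your proposal is correct and follows essentially the same route as the paper: substitute the formula for $\sigma_{\log}$ from Theorem~\ref{thm:sigma} into the definition of the distance, simplify using monotonicity of $\log$, and then pass to the lower closure, which only changes the value at $(0,0)$. The only difference is that you spell out the $\liminf$ verification at the various boundary points (in particular the lower bound $x'\log(x'/y')\geq 0$ and the diagonal sequence at the origin), details which the paper's proof leaves implicit when it asserts $\overline{\mathcal{D}}_{\sigma_{\log}}(0,0)=0$.
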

\begin{proof}
From the definition and Theorem~\ref{thm:sigma}:
\begin{subequations}
\begin{align}
\mathcal{D}_{\sigma_{\log}}: (x,y) \mapsto \;\;&\sigma_{\log}(x,\log(y))-\langle x, \log (y) \rangle\\
&= - x\log(y)+\begin{cases}
x \log(x) & \text{if~} \log(y)\leq \log(x),\\
\infty & \text{otherwise},
\end{cases}
\end{align}
\end{subequations}
which may be recognized as the form in \eqref{eqn:Dsigma} except at the point $(0,0)$. Taking the closure of the epigraph of $\mathcal{D}_{\sigma_{\log}}$, we obtain $\overline{\mathcal{D}}_{\sigma_{\log}}(0,0)=0$. This example is illustrative, because lower semicontinuity is lost, but only at the point $(0,0)$, since $0 \notin \dom \log$; see Remark~\ref{rem:lsc}.
\end{proof}

\begin{figure}
\begin{center}
\subfloat[{$\overline{\mathcal{D}}_{F_{\log}}$}\label{D:fitz}]{\includegraphics[width=.3\textwidth]{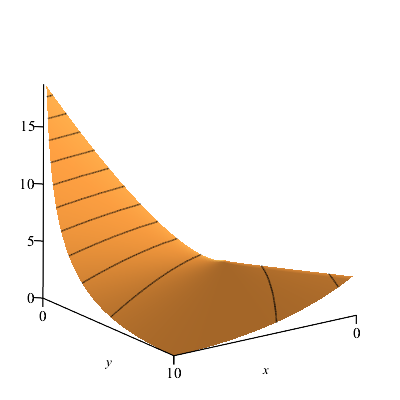}}
\subfloat[{$\overline{\mathcal{D}}_{\ent \oplus \ent^*}$}\label{D:bregman}]{\includegraphics[width=.3\textwidth]{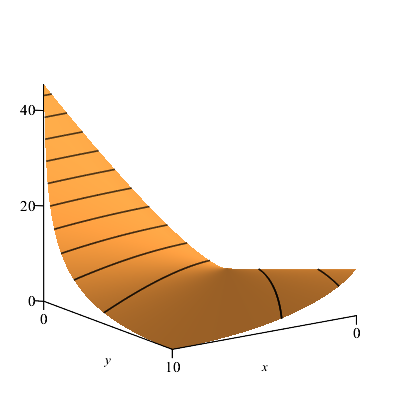}}
\subfloat[{$\overline{\mathcal{D}}_{\sigma_{\log}}$}\label{D:sigma}]{\includegraphics[width=.3\textwidth]{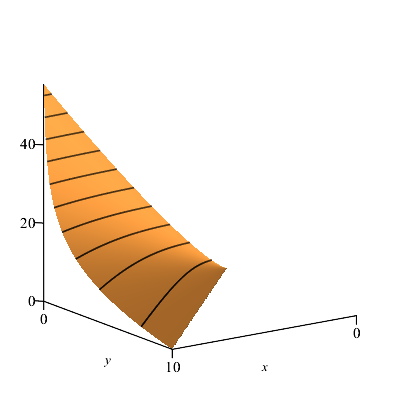}}
\end{center}
\caption{Distances constructed from $\mathcal{H}(\log)$.}\label{fig:D}
\end{figure}

\section{A coercivity framework for generalized Bregman distances}\label{sec:properties}

In this section, we will establish important properties of the GBD. From now on, $X$ is a \emph{reflexive} real Banach space. For the sake of simplicity, when we make use of the norms $\| \cdot \|_{X\oplus X^*}$, $\| \cdot \|_{X}$, and $\| \cdot \|_{X^*}$, we allow context to make clear which norm is being used. 

\subsection{Convexity}

\begin{proposition}
\label{p:cvx}
Let $x\in \dom S$ and $y\in \dom T$. Then the following hold:
\begin{enumerate}
\item\label{p:cvx_x}$\mathcal{D}_T^{\sharp,h}(\cdot,y)$ is convex.
\item\label{p:cvx_y} 
If $T$ is an affine mapping, then $\mathcal{D}_T^h(x,\cdot)$ is convex.
\end{enumerate}
\end{proposition}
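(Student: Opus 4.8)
The plan is to establish each part by reducing the GBD to an infimum or supremum of functions that are manifestly convex, and then invoking the standard stability of convexity under these operations.

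For part \ref{p:cvx_x}, fix $y\in\dom T$. For each fixed $v\in Ty$, the map $x\mapsto h(x,v)-\langle x,v\rangle$ is convex, since $h$ is convex in $(x,v)$ jointly (condition (a) in the definition of a representative function) and hence convex in $x$ for fixed $v$, while $x\mapsto -\langle x,v\rangle$ is affine. Now $\mathcal{D}_T^{\sharp,h}(\cdot,y)$ is by \eqref{Dflat} the pointwise supremum over $v\in Ty$ of this family of convex functions, and a pointwise supremum of convex functions is convex. One should check the boundary conventions: if $y\notin\dom T$ the function is constantly $+\infty$ (convex), and if $x\notin\dom S$ the value is $+\infty$; since $\dom S$ need not be convex in general, a small remark may be needed, but in fact condition (b) gives $h(x,v)\geq\langle x,v\rangle$ for \emph{all} $x$, so $\mathcal{D}_T^{\sharp,h}(\cdot,y)\geq 0$ everywhere and equals $\sup_{v\in Ty}(h(\cdot,v)-\langle\cdot,v\rangle)$ regardless of $\dom S$ — so the $+\infty$ convention on $X\setminus\dom S$ is automatically consistent with the sup formula as long as $h$ itself takes the value $+\infty$ there, which it need not; the cleanest route is to simply note that for $x\notin\dom S$ the convention assigns $+\infty$, and a function that is $+\infty$ off a set and convex on that set's ``support'' still needs the domain convex. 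I would therefore phrase the argument as: the supremum formula defines a convex function on all of $X$, and then observe it agrees with $\mathcal{D}_T^{\sharp,h}(\cdot,y)$; if there is a genuine discrepancy on $X\setminus\dom S$ one restricts attention there. In practice the referenced setup treats $S$ maximally monotone so $\overline{\dom}\,S$ is convex, and I expect the paper simply takes the sup formula at face value.

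For part \ref{p:cvx_y}, fix $x\in\dom S$ and suppose $T$ is affine, say $Ty = Ay + b$ for a linear $A$ and fixed $b$ (single-valued, so $\flat$ and $\sharp$ coincide and we write $\mathcal{D}_T^h$). Then $\mathcal{D}_T^h(x,y) = h(x, Ay+b) - \langle x, Ay+b\rangle$. The second term $y\mapsto -\langle x, Ay+b\rangle = -\langle A^\ast x, y\rangle - \langle x,b\rangle$ is affine in $y$. The first term $y\mapsto h(x, Ay+b)$ is the composition of the convex function $(\cdot)\mapsto h(x,\cdot)$ with the affine map $y\mapsto Ay+b$, hence convex. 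A sum of a convex function and an affine function is convex, so $\mathcal{D}_T^h(x,\cdot)$ is convex. One should again note the $+\infty$ convention when $y\notin\dom T$ or $x\notin\dom S$ is consistent: $\dom T = X$ for an affine single-valued map (or its natural affine domain, which is affine hence convex), and for $x\notin\dom S$ the function is identically $+\infty$, which is convex.

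The main obstacle is not any deep inequality — both parts are routine once the right reductions are in place — but rather the bookkeeping around the $+\infty$ conventions and the possible non-convexity of $\dom S$ (and, in part \ref{p:cvx_x}, the fact that the defining sup is over the set $Ty$, which is fixed and hence causes no trouble, but one must confirm nothing depends on $x$ in the index set). I would handle this by first writing the clean convexity argument for the ``interior'' case where all quantities are finite, then dispatching the infinite cases by the explicit conventions recorded after \eqref{D}, noting in particular that representative-function condition (b) forces $h(x,v)-\langle x,v\rangle\geq 0$ so there is no interaction between the infimum/supremum and the $\pm\infty$ values that could break convexity.
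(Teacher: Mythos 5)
Your argument is essentially the paper's own proof: part \ref{p:cvx_x} is the pointwise supremum over $v\in Ty$ of the convex functions $h(\cdot,v)-\langle\cdot,v\rangle$ (the paper cites \cite[Proposition~3.4.3(iii)]{BI08} for this), and part \ref{p:cvx_y} is the composition of the convex function $h(x,\cdot)-\langle x,\cdot\rangle$ with the affine map $T$. Your extended aside about the $+\infty$ convention off $\dom S$ is a side issue the paper does not engage with (it implicitly reads the statement through the sup formula), and it does not change the substance of the argument.
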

\begin{proof}

We first note that $h$ is convex on $X\times X^\ast$ by definition.

\ref{p:cvx_x}: As $h(\cdot, v) -\langle \cdot,v \rangle$ is convex for all $v\in Ty$, it follows from \cite[Proposition~3.4.3(iii)]{BI08} that $\mathcal{D}_T^{\sharp,h}(\cdot,y)$ is convex. 

\ref{p:cvx_y}: Since $T$ is single-valued, we have for all $y\in \dom T$ that
\begin{equation}
\mathcal{D}_T^h(x,y) =\mathcal{D}_T^{\flat,h}(x,y) =\mathcal{D}_T^{\sharp,h}(x,y) =h(x,Ty) -\langle x,Ty \rangle.
\end{equation}
Therefore, $\mathcal{D}_T^h(x,\cdot) =(h(x,\cdot)-\langle x,\cdot \rangle)\circ T$ is convex because it is the composition of a convex function with an affine function; see, e.g., \cite[Lemma~2.1.8(b)]{BV10}.
\end{proof}

\subsection{Coercivity and supercoercivity}\label{sec:coercivity}

We now turn our attention to coercivity and supercoercivity. These properties of distances are important, because they are essential to the analysis of associated envelopes and proximity operators. After first providing a framework for verifying these properties of the GBDs, we will show in Section~\ref{sec:coercivityofthesum} how these properties admit corresponding coercivity properties for the sum of the GBDs together with Legendre functions. These results on sums are the key to analysing the envelopes; see \cite[Lemma 2.12]{BCN06} and \cite{BDL17}.

From now on, as mentioned, we assume our spaces to be reflexive, so that we may make use of the following fact from \cite[Remark 3.12]{BM-L18}.

\begin{fact}[{\cite[Remark 3.12]{BM-L18}}]
\label{f:estimate}
When $X$ is a reflexive space, it holds that
\begin{equation}
\forall x,y\in X,\quad \mathcal{D}_T^{\flat,h}(x,y) \geq \frac{1}{4}\underset{v \in Ty}{\inf}d^2 \left((x,v),\mathcal{G}(S) \right) = \frac14 d^2 \left(\{x\} \times Ty,\mathcal{G}(S) \right),
\end{equation}
where $d$ denotes the distance on $X\times X^*$ defined by $d((x,v),(y,w)) :=\sqrt{\|x-y\|^2+\|v-w\|^2}$. Consequently, we can see $\mathcal{D}_T^{\flat, h}(x,y)$ as providing us with an upper estimate of the distance between the sets $\{x\} \times Ty$ and $\mathcal{G}(S)$.
\end{fact}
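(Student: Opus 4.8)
My plan is to reduce the statement to a single pointwise inequality for the Fitzpatrick function $F_S$ and then to settle that inequality by a Minty-type (resolvent) argument. First note that the equality on the right-hand side is merely the definition of the distance between the sets $\{x\}\times Ty$ and $\mathcal{G}(S)$, so only the inequality has content. Since $\mathcal{D}_T^{\flat,h}(x,y)=\inf_{v\in Ty}\bigl(h(x,v)-\langle x,v\rangle\bigr)$ and the right-hand side is likewise an infimum over $v\in Ty$, it suffices to prove that for each fixed $v\in Ty$,
\[
h(x,v)-\langle x,v\rangle\ \ge\ \tfrac14\,d^2\bigl((x,v),\mathcal{G}(S)\bigr),
\]
and then take the infimum over $v\in Ty$; the degenerate cases ($y\notin\dom T$, so that $Ty=\varnothing$, or $x\notin\dom S$) are covered by the $+\infty$ conventions. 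Because $F_S$ is the smallest member of $\mathcal{H}(S)$, we have $h\ge F_S$ pointwise, so it is enough to treat $h=F_S$; and since $\langle x,v\rangle$ is constant in the supremum, the definition of $F_S$ at once gives
\[
F_S(x,v)-\langle x,v\rangle=\sup_{(z,w)\in\mathcal{G}(S)}\langle z-x,\,v-w\rangle .
\]
Hence it remains only to exhibit one pair $(z,w)\in\mathcal{G}(S)$ with $\langle z-x,\,v-w\rangle\ge\tfrac14\bigl(\|x-z\|^2+\|v-w\|^2\bigr)$, since the latter quantity dominates $\tfrac14\,d^2\bigl((x,v),\mathcal{G}(S)\bigr)$.

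Producing such a pair is the heart of the proof, and the step I expect to be the main obstacle; this is where reflexivity and maximal monotonicity are essential, via the Banach-space form of Minty's theorem. Let $J=\partial\bigl(\tfrac12\|\cdot\|^2\bigr)\colon X\rightrightarrows X^*$ be the duality mapping. The operator $u\mapsto S(u+x)$ is maximally monotone, and $X$ is reflexive, so Rockafellar's surjectivity theorem yields $\ran\bigl(S(\cdot+x)+J\bigr)=X^*$; equivalently, $v\in\ran\bigl(S+J(\cdot-x)\bigr)$. Thus there exist $z\in X$, $w\in Sz$, and $\eta\in J(z-x)$ with $v=w+\eta$; in particular $(z,w)\in\mathcal{G}(S)$ and $v-w=\eta\in J(z-x)$. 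The defining identities of the duality mapping, namely $\langle z-x,\eta\rangle=\|z-x\|^2$ and $\|\eta\|_*=\|z-x\|$, then give $\langle z-x,\,v-w\rangle=\|z-x\|^2$ and $\|v-w\|^2=\|z-x\|^2$, whence
\[
F_S(x,v)-\langle x,v\rangle\ \ge\ \langle z-x,\,v-w\rangle=\tfrac12\bigl(\|x-z\|^2+\|v-w\|^2\bigr)\ \ge\ \tfrac12\,d^2\bigl((x,v),\mathcal{G}(S)\bigr).
\]
This is in particular at least $\tfrac14\,d^2\bigl((x,v),\mathcal{G}(S)\bigr)$; combined with the reductions above and the passage to the infimum over $v\in Ty$, the claim follows.

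The one delicate ingredient is Rockafellar's surjectivity theorem on a reflexive Banach space, together with the harmless technicality that one may have to replace the norm by an equivalent one under which $J$ is single-valued (or carry $J$ along as a set-valued map); in any case the duality-map identities invoked above are simply the Fenchel--Young equality for $\tfrac12\|\cdot\|^2$ and its conjugate $\tfrac12\|\cdot\|_*^2$, and hold for every admissible choice. I note in passing that the constant $\tfrac14$ stated is not sharp --- the argument above in fact delivers $\tfrac12$ --- and that, since this result is quoted from \cite[Remark~3.12]{BM-L18}, one may alternatively just refer to the proof given there.
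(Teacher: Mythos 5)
Your proposal is correct, and it is worth noting that the paper itself offers no proof of this statement: Fact~\ref{f:estimate} is simply quoted from \cite[Remark~3.12]{BM-L18}. Your argument is therefore a genuinely self-contained alternative. The reduction chain is sound: the right-hand equality is definitional, the infimum over $v\in Ty$ passes through, the minimality of the Fitzpatrick function reduces everything to the pointwise bound for $F_S$, and the identity $F_S(x,v)-\langle x,v\rangle=\sup_{(z,w)\in\mathcal{G}(S)}\langle z-x,v-w\rangle$ follows at once from the definition used in the paper. The Minty/Rockafellar step is the right tool: choosing $(z,w)\in\mathcal{G}(S)$ and $\eta\in J(z-x)$ with $v=w+\eta$ gives $\langle z-x,v-w\rangle=\|z-x\|^2=\tfrac12\bigl(\|x-z\|^2+\|v-w\|_*^2\bigr)\geq\tfrac12\,d^2\bigl((x,v),\mathcal{G}(S)\bigr)$, which indeed yields the sharper constant $\tfrac12$; this is consistent with the energy example in Section~\ref{sec:examples}, where $F_{\Id}(x,v)-xv=\tfrac14(x-v)^2$ equals exactly $\tfrac12$ of the squared distance to the diagonal, so $\tfrac12$ is sharp and the stated $\tfrac14$ is a weaker bound. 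One caution on the technicality you flag: you should take the option of keeping the original norm and carrying $J$ as a set-valued map, rather than renorming. Renorming would change both the duality map and the metric $d$, and transferring the estimate back to the original norm would degrade the constant by equivalence factors. With the original (possibly multivalued) duality map, surjectivity of $S(\cdot+x)+J$ on a reflexive space is still standard (the sum is maximal monotone since $\dom J=X$, and coercive maximal monotone operators on reflexive spaces are surjective), and the Fenchel--Young identities for $\tfrac12\|\cdot\|^2$ hold for every selection $\eta\in J(z-x)$, exactly as you say.
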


Throughout this section, we exploit the fact that the GBD is minorized by the distance between the sets $\{x\} \times Ty$ and $\mathcal{G}(S)$ in order to establish left and right coercivity and supercoercivity of the distance. The intuition behind the results is shown in Figure~\ref{fig:sqrtx}.

The following elementary lemma will be useful for our analysis.
\begin{lemma}
\label{l:lim}
Let $(x_n)_{n\in \NN}$, $(y_n)_{n\in \NN}$, and $(z_n)_{n\in \NN}$ be sequences in $X$ such that $\|x_n\|\to \infty$ as $n\to \infty$. Then the following hold:
\begin{enumerate}
\item\label{l:lim_1}
Suppose that $\|z_{k_n}\|\to \infty$ whenever $(y_{k_n})_{n\in \NN}$ is a subsequence of $(y_n)_{n\in \NN}$ with $\|y_{k_n}\|\to \infty$. Then, for all $\alpha\in \RR_{++}$, 
\begin{equation}
\|x_n-y_n\|^\alpha +\|z_n\|^\alpha\to \infty \quad\text{as~} n\to\infty.
\end{equation}
\item\label{l:lim_2}
Suppose that $\|z_{k_n}\|^2/\|y_{k_n}\|\to \infty$ whenever $(y_{k_n})_{n\in \NN}$ is a subsequence of $(y_n)_{n\in \NN}$ with $\|y_{k_n}\|\to \infty$. Then 
\begin{equation}
\frac{\|x_n-y_n\|^2 +\|z_n\|^2}{\|x_n\|}\to \infty \quad\text{as~} n\to\infty.
\end{equation} 
\end{enumerate}
\end{lemma}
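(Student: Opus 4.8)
The statement is an elementary real-analysis lemma about sequences, and both parts should be handled by a subsequence/contradiction argument, since the hypotheses are phrased precisely in the ``whenever a subsequence of $(y_n)$ blows up'' form that contradiction arguments thrive on. First I would dispose of a trivial reduction: if $(y_n)$ is bounded, then $\|x_n-y_n\|\to\infty$ because $\|x_n\|\to\infty$ and $\|y_n\|$ stays bounded (reverse triangle inequality), so the sum $\|x_n-y_n\|^\alpha+\|z_n\|^\alpha\to\infty$ immediately in \ref{l:lim_1}, and similarly $\|x_n-y_n\|^2/\|x_n\|\to\infty$ in \ref{l:lim_2} since $\|x_n-y_n\|/\|x_n\|\to 1$ would already give boundedness from below, but in fact $\|x_n-y_n\|^2/\|x_n\|\ge (\|x_n\|-\|y_n\|)^2/\|x_n\|\to\infty$. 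So the substance is the case where $(y_n)$ has an unbounded subsequence.

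For \ref{l:lim_1}, suppose for contradiction that $\|x_n-y_n\|^\alpha+\|z_n\|^\alpha\not\to\infty$; then there is a subsequence $(k_n)$ and a constant $M$ with $\|x_{k_n}-y_{k_n}\|^\alpha+\|z_{k_n}\|^\alpha\le M$ for all $n$. In particular $\|z_{k_n}\|\le M^{1/\alpha}$ is bounded and $\|x_{k_n}-y_{k_n}\|\le M^{1/\alpha}$ is bounded. From the latter and $\|x_{k_n}\|\to\infty$ we get $\|y_{k_n}\|\to\infty$ (reverse triangle inequality again). But then the hypothesis of \ref{l:lim_1}, applied to this very subsequence, forces $\|z_{k_n}\|\to\infty$, contradicting its boundedness. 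For \ref{l:lim_2} I would run the same scheme: if $\|x_n-y_n\|^2/\|x_n\|\not\to\infty$, pass to a subsequence $(k_n)$ on which this quantity is bounded by some $M$; combined with $\|z_{k_n}\|^2/\|x_{k_n}\|$, deduce both $\|x_{k_n}-y_{k_n}\|^2\le M\|x_{k_n}\|$ and (after possibly shrinking the subsequence) a bound on $\|z_{k_n}\|^2/\|x_{k_n}\|$. From $\|x_{k_n}-y_{k_n}\|^2\le M\|x_{k_n}\|$ and $\|x_{k_n}\|\to\infty$ one gets $\|y_{k_n}\|\ge \|x_{k_n}\|-\sqrt{M\|x_{k_n}\|}\to\infty$, so $(y_{k_n})$ is an unbounded subsequence of $(y_n)$; the hypothesis then gives $\|z_{k_n}\|^2/\|y_{k_n}\|\to\infty$, and since $\|y_{k_n}\|\le \|x_{k_n}\|+\|x_{k_n}-y_{k_n}\|\le \|x_{k_n}\|+\sqrt{M\|x_{k_n}\|}$, the ratio $\|y_{k_n}\|/\|x_{k_n}\|$ is bounded, so $\|z_{k_n}\|^2/\|x_{k_n}\|\to\infty$ as well — contradicting the bound on the sum along the subsequence.

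The only mild subtlety, and the step I would be most careful about, is the bookkeeping in \ref{l:lim_2} when passing between $\|x_n\|$ and $\|y_n\|$ in the denominators: one must check that along the bad subsequence $\|y_{k_n}\|$ and $\|x_{k_n}\|$ are comparable (each bounded by a constant times the other, up to lower-order terms), so that $\|z_{k_n}\|^2/\|y_{k_n}\|\to\infty$ can be transferred to $\|z_{k_n}\|^2/\|x_{k_n}\|\to\infty$. This is exactly where the $\sqrt{M\|x_{k_n}\|}$ estimate above does the work, and it is genuinely using that the exponent is $2$ and the denominator is a first power (the ``supercoercive'' scaling), as opposed to the homogeneous situation in \ref{l:lim_1}. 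Everything else is routine triangle-inequality manipulation, and no compactness or completeness of $X$ is needed — the argument is purely about the norm sequences.
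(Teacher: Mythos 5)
Your proof is correct and, at its core, takes the same route as the paper: a contradiction argument along a subsequence on which the relevant quantity stays bounded. For part (i) the two arguments use identical ingredients and differ only in where the contradiction is harvested: the paper concludes from the boundedness of $(z_{k_n})_{n\in\NN}$ that $(y_{k_n})_{n\in\NN}$, and hence $(x_{k_n})_{n\in\NN}$, is bounded, contradicting $\|x_n\|\to\infty$, whereas you deduce $\|y_{k_n}\|\to\infty$ from the bounded differences and then contradict the boundedness of $(z_{k_n})_{n\in\NN}$. For part (ii) your handling is genuinely more elementary: the paper expands $2\langle x_{k_n},y_{k_n}\rangle$ and uses Cauchy--Schwarz to get $2\|y_{k_n}\|>\|x_{k_n}\|-\mu$, then bounds $\|z_{k_n}\|^2/\|y_{k_n}\|$ above by roughly $2\mu$ to reach the contradiction; you instead use only the triangle inequality, via $\|y_{k_n}\|\geq\|x_{k_n}\|-\sqrt{M\|x_{k_n}\|}$, and transfer the divergence of $\|z_{k_n}\|^2/\|y_{k_n}\|$ to $\|z_{k_n}\|^2/\|x_{k_n}\|$. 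A small bonus of your route is that it never even nominally appeals to an inner-product identity, which sits more comfortably with the section's standing assumption that $X$ is a reflexive Banach space (the paper's inequality can also be justified by norms alone, but as written it is a Hilbert-style computation).

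One wording slip to repair in part (ii): for the final transfer you invoke the upper bound $\|y_{k_n}\|\leq\|x_{k_n}\|+\sqrt{M\|x_{k_n}\|}$ (``the ratio $\|y_{k_n}\|/\|x_{k_n}\|$ is bounded''), but what that step actually needs is that $\|y_{k_n}\|/\|x_{k_n}\|$ is bounded away from zero, i.e.\ $\|x_{k_n}\|\leq 2\|y_{k_n}\|$ for all large $n$, so that $\|z_{k_n}\|^2/\|x_{k_n}\|\geq\tfrac{1}{2}\,\|z_{k_n}\|^2/\|y_{k_n}\|\to\infty$, contradicting $\|z_{k_n}\|^2/\|x_{k_n}\|\leq M$. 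That bound follows at once from the lower estimate $\|y_{k_n}\|\geq\|x_{k_n}\|-\sqrt{M\|x_{k_n}\|}$ which you already derived two lines earlier (since $\sqrt{M/\|x_{k_n}\|}\to 0$), and you do flag two-sided comparability as the key point, so this is a citation of the wrong half of the comparability rather than a gap; just swap in the lower bound when you write it up.
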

\begin{proof}
\ref{l:lim_1}: Suppose to the contrary that there exist subsequences $(x_{k_n})_{n\in \NN}$, $(y_{k_n})_{n\in \NN}$, and $(z_{k_n})_{n\in \NN}$ such that the sequence $(\|x_{k_n}-y_{k_n}\|^\alpha +\|z_{k_n}\|^\alpha)_{n\in \NN}$ is bounded. Then both $(x_{k_n}-y_{k_n})_{n\in \NN}$ and $(z_{k_n})_{n\in \NN}$ are bounded.  
By assumption, passing to another subsequence if necessary, we obtain that the sequence $(y_{k_n})_{n\in \NN}$ is also bounded, and so is $(x_{k_n})_{n\in \NN}$ since
\begin{equation}
\forall n\in \NN,\quad \|x_{k_n}\|\leq \|x_{k_n}-y_{k_n}\| +\|y_{k_n}\|.
\end{equation}
This contradicts the assumption that $\|x_n\|\to \infty$. 

\ref{l:lim_2}: Suppose that there exist subsequences $(x_{k_n})_{n\in \NN}$, $(y_{k_n})_{n\in \NN}$, $(z_{k_n})_{n\in \NN}$ and a constant $\mu >0$ such that 
\begin{equation}
\label{e:contrary}
\forall n\in \NN,\quad \frac{\|x_{k_n}-y_{k_n}\|^2 +\|z_{k_n}\|^2}{\|x_{k_n}\|} <\mu.
\end{equation}
Then, by Cauchy--Schwarz inequality, 
\begin{subequations}
\label{e:ykn}
\begin{align}
2\|y_{k_n}\|&\geq \frac{2\langle x_{k_n},y_{k_n} \rangle}{\|x_{k_n}\|} =\frac{\|x_{k_n}\|^2 +\|y_{k_n}\|^2 -\|x_{k_n}-y_{k_n}\|^2}{\|x_{k_n}\|}\\
&\geq \|x_{k_n}\| -\frac{\|x_{k_n}-y_{k_n}\|^2}{\|x_{k_n}\|} >\|x_{k_n}\| -\mu. 
\end{align}
\end{subequations}
As $n\to \infty$, since $\|x_n\|\to \infty$, it follows from \eqref{e:ykn} that $\|y_{k_n}\|\to \infty$ and, by assumption, $\|z_{k_n}\|^2/\|y_{k_n}\|\to \infty$.
On the other hand, combining \eqref{e:contrary} with \eqref{e:ykn} yields
\begin{equation}
\frac{\|z_{k_n}\|^2}{\|y_{k_n}\|} <\mu\frac{\|x_{k_n}\|}{\|y_{k_n}\|}\leq \mu\frac{2\|y_{k_n}\|+\mu}{\|y_{k_n}\|} =2\mu +\frac{\mu^2}{\|y_{k_n}\|}\to 2\mu.
\end{equation}  
A contradiction is thus obtained, and we complete the proof.
\end{proof}

\begin{remark}
\label{r:precoer}
\begin{enumerate}
\item 
Since $\mathcal{D}_T^{\sharp,h}(x,y)\geq \mathcal{D}_T^{\flat,h}(x,y)$ for all $(x,y)\in \dom S\times \dom T$, if $\mathcal{D}_T^{\flat,h}$ is coercive or supercoercive with respect to the first or second variable, then so is $\mathcal{D}_T^{\sharp,h}$. We will thus focus on the coercivity and supercoercivity of $\mathcal{D}_T^{\flat,h}$. 
\item\label{r:precoer_exact} 
Assume that $Ty$ is compact with respect to the strong topology. Then $\{x\}\times Ty$ is also compact. This, together with Fact~\ref{f:estimate} and the fact that $\mathcal{G}(S)$ is closed, allows us to choose $v\in Ty$ and $(a,b)\in \mathcal{G}(S)$ such that 
\begin{equation}
\|x-a\|^2 +\|v-b\|^2 =d^2\big((x,v),(a,b)\big) =d^2(\{x\}\times Ty, \mathcal{G}(S))\leq 4\mathcal{D}_T^{\flat,h}(x,y).
\end{equation} 
\end{enumerate}
\end{remark}

\begin{theorem}[Left coercivity and left supercoercivity of $\mathcal{D}_T^{\flat,h}$]
\label{t:left}
Let $y\in \dom T$. Then
\begin{enumerate}
\item\label{t:left_dom} 
If $\dom S$ is bounded, then $\mathcal{D}_T^{\flat,h}(\cdot,y)$ is supercoercive and hence coercive.
\end{enumerate}
Suppose further that $Ty$ is compact with respect to the strong topology. Then the following hold:
\begin{enumerate}
\stepcounter{enumi}
\item\label{t:left_coer} 
If $S$ is \emph{coercive} in the sense that $(a_n,b_n)\in \mathcal{G}(S)$ and $\|a_n\|\to \infty$ imply $\|b_n\|\to \infty$, then $\mathcal{D}_T^{\flat,h}(\cdot,y)$ is coercive.
\item\label{t:left_super}
If $(a_n,b_n)\in \mathcal{G}(S)$ and $\|a_n\|\to \infty$ imply $\|b_n\|^2/\|a_n\|\to \infty$, then $\mathcal{D}_T^{\flat,h}(\cdot,y)$ is supercoercive.
\end{enumerate}
\end{theorem}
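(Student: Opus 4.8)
The plan is to prove the three parts by reducing each to the elementary Lemma~\ref{l:lim} via the set-distance estimate of Fact~\ref{f:estimate}. For part~\ref{t:left_dom}, suppose $\|x_n\|\to\infty$; since $\dom S$ is bounded and $\mathcal{D}_T^{\flat,h}(x_n,y)=+\infty$ whenever $x_n\notin\dom S$, we may assume $x_n\in\dom S$, which is impossible once $\|x_n\|$ exceeds the diameter bound, so $\mathcal{D}_T^{\flat,h}(x_n,y)=+\infty$ eventually and supercoercivity (hence coercivity) is immediate.

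For parts~\ref{t:left_coer} and~\ref{t:left_super}, fix $y\in\dom T$ with $Ty$ strongly compact, and let $(x_n)_{n\in\NN}$ be any sequence with $\|x_n\|\to\infty$. Using Remark~\ref{r:precoer}\ref{r:precoer_exact}, for each $n$ choose $v_n\in Ty$ and $(a_n,b_n)\in\mathcal{G}(S)$ with
\begin{equation}
\|x_n-a_n\|^2+\|v_n-b_n\|^2\leq 4\mathcal{D}_T^{\flat,h}(x_n,y).
\end{equation}
Since $Ty$ is compact, $(v_n)$ is bounded, so $\|v_n-b_n\|\to\infty$ forces $\|b_n\|\to\infty$, and along any subsequence with $\|b_n\|\to\infty$ the coercivity (resp.\ supercoercivity) hypothesis on $S$ gives $\|a_n\|\to\infty$ (and then $\|b_n\|^2/\|a_n\|\to\infty$ in the supercoercive case). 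We now apply Lemma~\ref{l:lim} with the substitution $x_n\mapsto x_n$, $y_n\mapsto a_n$, $z_n\mapsto v_n-b_n$. For \ref{t:left_coer}: we must verify that whenever a subsequence of $(a_n)$ has norm tending to infinity, so does the corresponding subsequence of $(v_n-b_n)$; this follows because $\|a_n\|\to\infty$ together with coercivity of $S$ gives $\|b_n\|\to\infty$, and boundedness of $(v_n)$ then gives $\|v_n-b_n\|\to\infty$. Hence Lemma~\ref{l:lim}\ref{l:lim_1} with $\alpha=2$ yields $\|x_n-a_n\|^2+\|v_n-b_n\|^2\to\infty$, so $\mathcal{D}_T^{\flat,h}(x_n,y)\to\infty$, proving coercivity. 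For \ref{t:left_super}: the hypothesis ensures that whenever $\|a_{k_n}\|\to\infty$ we get $\|b_{k_n}\|^2/\|a_{k_n}\|\to\infty$, and since $\|v_{k_n}-b_{k_n}\|^2 = \|b_{k_n}\|^2(1+o(1))$ up to the bounded perturbation $v_{k_n}$ (more carefully, $\|v_{k_n}-b_{k_n}\|\geq \|b_{k_n}\|-C$ for a bound $C$ on $(v_n)$, so $\|v_{k_n}-b_{k_n}\|^2/\|a_{k_n}\|\to\infty$ as well), the hypothesis of Lemma~\ref{l:lim}\ref{l:lim_2} is met, giving $(\|x_n-a_n\|^2+\|v_n-b_n\|^2)/\|x_n\|\to\infty$, whence $\mathcal{D}_T^{\flat,h}(x_n,y)/\|x_n\|\to\infty$.

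The main obstacle I anticipate is the careful bookkeeping in the subsequence arguments: Lemma~\ref{l:lim} is phrased in terms of a hypothesis on \emph{all} subsequences $(y_{k_n})$ along which $\|y_{k_n}\|\to\infty$, so one must check the stated implication holds for the specific sequences $a_n$ and $z_n=v_n-b_n$ produced by the selection in Remark~\ref{r:precoer}\ref{r:precoer_exact}, rather than assuming $\|a_n\|\to\infty$ outright (a priori $(a_n)$ need not be unbounded, e.g.\ if $\mathcal{D}_T^{\flat,h}(x_n,y)=+\infty$ along a subsequence, in which case the conclusion is trivial there anyway). Once the translation between "distance to $\mathcal{G}(S)$ tends to infinity" and "$\mathcal{D}_T^{\flat,h}$ tends to infinity" is made precise via Fact~\ref{f:estimate} and the compactness selection, the rest is a direct invocation of Lemma~\ref{l:lim} with $\alpha=2$ and the triangle-inequality estimate $\|v_n-b_n\|\geq\|b_n\|-\sup_m\|v_m\|$ to transfer growth of $\|b_n\|$ to growth of $\|v_n-b_n\|$. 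Part~\ref{t:left_dom} is essentially trivial and needs no machinery.
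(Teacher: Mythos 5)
Your proposal is correct and follows essentially the same route as the paper: part \ref{t:left_dom} via eventual escape from the bounded $\dom S$, and parts \ref{t:left_coer}--\ref{t:left_super} via the selection of $v_n\in Ty$ and $(a_n,b_n)\in\mathcal{G}(S)$ from Remark~\ref{r:precoer}\ref{r:precoer_exact}, the bound $\|v_n-b_n\|\geq\|b_n\|-\sup_m\|v_m\|$ to transfer growth of $\|b_n\|$, and Lemma~\ref{l:lim}\ref{l:lim_1}--\ref{l:lim_2} applied to $(x_n)$, $(a_n)$, $(v_n-b_n)$. One sentence ("along any subsequence with $\|b_n\|\to\infty$ the coercivity hypothesis on $S$ gives $\|a_n\|\to\infty$") reverses the implication, but your subsequent verification runs in the correct direction, so the argument stands as written.
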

\begin{proof}
Let $(x_n)_{n\in \NN}$ satisfy $\|x_n\|\to \infty$ as $n\to \infty$.  

\ref{t:left_dom}: As $\dom S$ is bounded, there exists $N\in \NN$ such that for $n\geq N$ we have $x_n\notin \dom S$. Fixing an arbitrary $n\geq N$, by definition, $\mathcal{D}_T^{\flat,h}(x_n,y) =\infty$, so $\mathcal{D}_T^{\flat,h}(x_n,y)/\|x_n\| =\infty$, and we are done.	

To prove \ref{t:left_coer} and \ref{t:left_super}, we derive from Remark~\ref{r:precoer}\ref{r:precoer_exact} that, since $Ty$ is compact, there exist $v_n\in Ty$ and $(a_n,b_n)\in \mathcal{G}(S)$ such that
\begin{equation}
\label{e:estimateD}
\forall n \in \NN,\quad 4\mathcal{D}_T^{\flat,h}(x_n,y)\geq \|x_n-a_n\|^2 +\|v_n-b_n\|^2.
\end{equation}
Here, we note that $\|x_n\|\to \infty$ as $n\to \infty$ and that $(v_n)_{n\in \NN}$ is bounded due to compactness of $Ty$. 

\ref{t:left_coer}: If $(a_{k_n})_{n\in \NN}$ is a subsequence of $(a_n)_{n\in \NN}$ with $\|a_{k_n}\|\to \infty$, then by assumption \ref{t:left_coer}, $\|b_{k_n}\|\to \infty$, which implies that $\|v_{k_n}-b_{k_n}\|\to \infty$. Applying Lemma~\ref{l:lim}\ref{l:lim_1} to the sequences $(x_n)_{n\in \NN}$, $(a_n)_{n\in \NN}$, and $(v_n-b_n)_{n\in \NN}$, we obtain that $\|x_n-a_n\|^2 +\|v_n-b_n\|^2\to \infty$ and, by \eqref{e:estimateD}, $\mathcal{D}_T^{\flat,h}(x_n,y)\to \infty$ as $n\to \infty$.  

\ref{t:left_super}: If $(a_{k_n})_{n\in \NN}$ is a subsequence of $(a_n)_{n\in \NN}$ with $\|a_{k_n}\|\to \infty$, then by assumption \ref{t:left_super}, $\|b_{k_n}\|^2/\|a_{k_n}\|\to \infty$, so $\|b_{k_n}\|\to \infty$ and
\begin{equation}
\frac{\|v_{k_n}-b_{k_n}\|^2}{\|a_{k_n}\|}\geq \frac{\|b_{k_n}\|^2-2\|v_{k_n}\|\|b_{k_n}\|}{\|a_{k_n}\|} =\frac{\|b_{k_n}\|^2}{\|a_{k_n}\|} \left(1-\frac{2\|v_{k_n}\|}{\|a_{k_n}\|\|b_{k_n}\|}\right)\to \infty.
\end{equation} 
Now, using Lemma~\ref{l:lim}\ref{l:lim_2} yields 
\begin{equation}
\frac{\|x_n-a_n\|^2 +\|v_n-b_n\|^2}{\|x_n\|}\to \infty \quad\text{as~} n\to \infty, 
\end{equation}  
which together with \eqref{e:estimateD} completes the proof.
\end{proof}

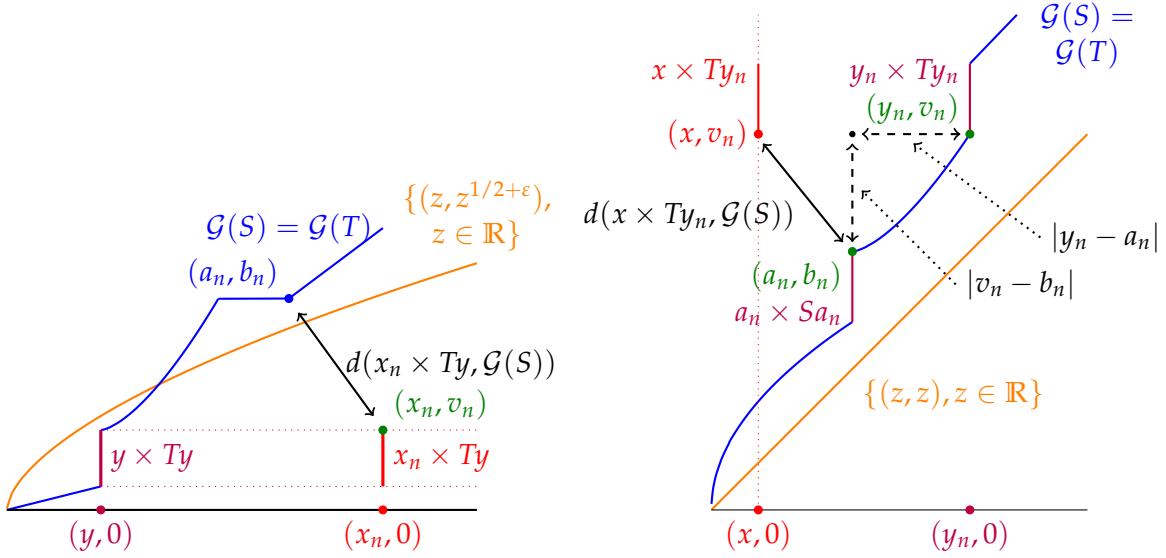
\begin{figure}
\begin{center}
\begin{tikzpicture}[scale=1.25]
\draw[thick,scale=1,domain=0.001:5,smooth,samples=100,variable=\x,orange] plot ({\x},{\x^(6/10)});
\node [above,orange] at (5,2.62) {$\begin{array}{c} \{(z,z^{1/2+\varepsilon}),\\
	z \in \RR \}\end{array}$};

\draw [thick,blue] (0,0) -- (1,.25);
\draw[thick,scale=1,domain=0.001:1.25,smooth,samples=50,variable=\x,blue] plot ({\x+1},{\x^(3/2)+.85});
\draw [thick,blue] (2.25,2.24754) -- (3,2.25);
\node [left, blue] at (4,3) {$\mathcal{G}(S)=\mathcal{G}(T)$};
\draw [thick,blue] (3,2.25) -- (4,3);

\draw [thick,black] (0,0) -- (5,0);

\draw [purple,dotted] (1,.25) -- (5,.25);
\draw [purple,dotted] (1,.85) -- (5,.85);

\draw [very thick,purple] (1,.25) -- (1,.85);
\node [right,purple] at (1,.55) {$y \times Ty$};
\filldraw [purple] (1,0) circle(1.2pt);
\node [below,purple] at (1,0) {$(y,0)$};

\draw [very thick,red] (4,.25) -- (4,.85);
\node [right,red] at (4,.55) {$x_n \times Ty$};
\filldraw [red] (4,0) circle(1.2pt);
\node [below,red] at (4,0) {$(x_n,0)$};	

\filldraw [color={rgb:black,1;green,1}] (4,.85) circle(1.2pt);
\node [above right,color={rgb:black,1;green,1}] at (4,.85) {$(x_n,v_n)$};

\draw [<->,thick,black] (3.1,2.10) -- (3.9,1);
\node [right,black] at (3.5,1.55) {$d(x_n \times Ty , \mathcal{G}(S)) $};

\filldraw [blue] (3,2.25) circle(1.2pt);
\node [above left,blue]  at (3,2.25) {$(a_n,b_n) $};

\end{tikzpicture}\begin{tikzpicture}[scale=1.25]

\draw[thick,scale=1,domain=0.001:4,smooth,samples=100,variable=\x,orange] plot ({\x},{\x});
\node [below right,orange] at (1.5,1.5) {$\{(z,z),z \in \RR \}$};

\draw[thick,scale=1,domain=0.001:1,smooth,samples=100,variable=\x,blue] plot ({(1.5)*\x},{(2.0)*\x^(1/2)});
\draw[thick,scale=1,domain=0.001:1,smooth,samples=100,variable=\x,blue] plot ({(1.25)*\x+1.5},{(1.25)*\x^(3/2)+2.75});
\draw[thick,blue] (2.75,4.75) -- (3.25,5.27);
\node [right,blue] at (3.25,5.05) {$\begin{array}{c}\mathcal{G}(S)=\\ \mathcal{G}(T)\end{array}$};

\draw[black] (0,0) -- (4,0);
\filldraw [red] (0.5,0) circle(1.2pt);
\node [red,below] at (0.5,0) {$(x,0)$};
\filldraw [purple] (2.75,0) circle(1.2pt);
\node [purple,below] at (2.75,0) {$(y_n,0)$};
\draw [red,dotted] (0.5,0) -- (0.5,5.27);

\draw[thick,purple] (1.5,2.0) -- (1.5,2.75);
\node [left,purple] at (1.5,2.1) {$a_n \times S a_n$};
\filldraw [color={rgb:black,1;green,1}] (1.5,2.75) circle(1.2pt);
\node [below left,color={rgb:black,1;green,1}] at (1.5,2.75) {$(a_n,b_n)$};

\draw[thick,purple] (2.75,4) -- (2.75,4.75); 
\node [left,purple] at (2.75,4.65) {$y_n \times Ty_n$};
\filldraw [color={rgb:black,1;green,1}] (2.75,4) circle(1.2pt);
\node [above left,color={rgb:black,1;green,1}] at (2.75,4) {$(y_n,v_n)$};

\draw[thick,red] (.5,4) -- (.5,4.75); 
\node [left,red] at (.5,4.65) {$x\times Ty_n$};
\filldraw [red] (0.5,4) circle(1.2pt);
\node [left,red] at (0.5,4) {$(x,v_n)$};	

\filldraw [black] (1.5,4) circle(0.8pt);

\draw [thick,dashed, <->,black] (2.65,4) -- (1.6,4);
\draw [thick,dotted,->,black] (3.5,2.9) -- (2.15,3.9);
\node [right,black] at (3.5,2.9) {$|y_n-a_n|$};

\draw [thick,dashed, <->,black] (1.5,2.85) -- (1.5,3.9);
\draw [thick,dotted,->,black] (2.6,2.4) -- (1.6,3.4);
\node [right,black] at (2.6,2.4) {$|v_n-b_n|$};

\draw [black,thick,<->] (0.55,3.9) -- (1.4,2.85);
\node [below left,black] at (1,3.4) {$d(x\times Ty_n,\mathcal{G}(S))$};
\end{tikzpicture}
\end{center}
\caption{Left: the motivation for Theorem~\ref{t:left} is exemplified by Example~\ref{ex:sqrtx}. Right: the motivation for Theorem~\ref{t:right} is exemplified by Example~\ref{ex:xsquared}.}\label{fig:sqrtx}
\end{figure}

\begin{theorem}[Right coercivity and right supercoercivity of $\mathcal{D}_T^{\flat,h}$]
\label{t:right}
Let $x\in \dom S$. Then 
\begin{enumerate}
\item\label{t:right_dom} 
If $\dom T$ is bounded, then $\mathcal{D}_T^{\flat,h}(x,\cdot)$ is supercoercive and hence coercive.
\end{enumerate}
Suppose further that $T$ has strongly compact images. Then the following hold:
\begin{enumerate}
\stepcounter{enumi}
\item\label{t:right_coer} 
If $(a_n,b_n)\in \mathcal{G}(S)$, $(y_n,v_n)\in \mathcal{G}(T)$, and $\|y_n-a_n\|\to \infty$ imply $\|v_n-b_n\|\to \infty$, then $\mathcal{D}_T^{\flat,h}(x,\cdot)$ is coercive.
\item\label{t:right_super} 
If $(a_n,b_n)\in \mathcal{G}(S)$, $(y_n,v_n)\in \mathcal{G}(T)$, and $\|y_n-a_n\|\to \infty$ imply $\|v_n-b_n\|^2/\|y_n-a_n\|\to \infty$, then $\mathcal{D}_T^{\flat,h}(x,\cdot)$ is supercoercive.
\end{enumerate}
\end{theorem}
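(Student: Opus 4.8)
The plan is to mirror the proof of Theorem~\ref{t:left} with the two variables interchanged. Part~\ref{t:right_dom} is immediate and parallels Theorem~\ref{t:left}\ref{t:left_dom}: if $\dom T$ is bounded and $\|y_n\|\to\infty$, then $y_n\notin\dom T$ for all large $n$, so $\mathcal{D}_T^{\flat,h}(x,y_n)=\infty$ and hence $\mathcal{D}_T^{\flat,h}(x,y_n)/\|y_n\|=\infty$ eventually; supercoercivity, and a fortiori coercivity, follows.

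For \ref{t:right_coer} I would argue by contradiction. Fix $x\in\dom S$, let $\|y_n\|\to\infty$, and suppose that along a subsequence (not relabelled) $\mathcal{D}_T^{\flat,h}(x,y_n)\leq M<\infty$. Then each $y_n\in\dom T$, so since $T$ has strongly compact images, Remark~\ref{r:precoer}\ref{r:precoer_exact} (via Fact~\ref{f:estimate} and closedness of $\mathcal{G}(S)$) furnishes $v_n\in Ty_n$ and $(a_n,b_n)\in\mathcal{G}(S)$ with
\[
\|x-a_n\|^2+\|v_n-b_n\|^2 \le 4\,\mathcal{D}_T^{\flat,h}(x,y_n) \le 4M .
\]
Thus $(a_n)_{n\in\NN}$ and $(\|v_n-b_n\|)_{n\in\NN}$ are bounded, and boundedness of $(a_n)_{n\in\NN}$ together with $\|y_n\|\to\infty$ forces $\|y_n-a_n\|\to\infty$. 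Since $(a_n,b_n)\in\mathcal{G}(S)$ and $(y_n,v_n)\in\mathcal{G}(T)$, hypothesis~\ref{t:right_coer} yields $\|v_n-b_n\|\to\infty$, contradicting boundedness; hence $\mathcal{D}_T^{\flat,h}(x,\cdot)$ is coercive.

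For \ref{t:right_super} I would again argue by contradiction: suppose a subsequence satisfies $\mathcal{D}_T^{\flat,h}(x,y_n)/\|y_n\|\leq\mu$ for some $\mu>0$. Choosing $v_n,(a_n,b_n)$ as above gives $\|x-a_n\|^2+\|v_n-b_n\|^2\le 4\mu\|y_n\|$, hence $\|a_n\|\le\|x\|+2\sqrt{\mu\|y_n\|}$ and so $\|a_n\|/\|y_n\|\to0$; consequently $\|y_n-a_n\|\to\infty$ and $\|y_n-a_n\|/\|y_n\|\to1$. Hypothesis~\ref{t:right_super} then gives $\|v_n-b_n\|^2/\|y_n-a_n\|\to\infty$, and multiplying by $\|y_n-a_n\|/\|y_n\|\to1$ yields $\|v_n-b_n\|^2/\|y_n\|\to\infty$, which contradicts the bound $\|v_n-b_n\|^2/\|y_n\|\leq4\mu$. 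Hence $\mathcal{D}_T^{\flat,h}(x,\cdot)$ is supercoercive.

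The main obstacle is the loss of the uniform bound on the $v$-component that was free in Theorem~\ref{t:left}, where $v$ ranged over the single fixed image $Ty$: here $v_n\in Ty_n$ varies with $n$, so the needed control must instead be extracted from Fact~\ref{f:estimate} inside the contradiction hypothesis, after which one must bootstrap from the resulting boundedness of $(a_n)_{n\in\NN}$ (respectively from $\|a_n\|/\|y_n\|\to0$) to the divergence of $\|y_n-a_n\|$ before the coercivity-type hypothesis relating $S$ and $T$ can be applied. One could alternatively route \ref{t:right_coer}--\ref{t:right_super} through a variant of Lemma~\ref{l:lim} with $\|y_n-a_n\|$ as the driving quantity, but the direct contradiction above is cleaner.
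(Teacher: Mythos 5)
Your proposal is correct, and part \ref{t:right_dom} coincides with the paper's argument; for \ref{t:right_coer}--\ref{t:right_super} you take a mildly but genuinely different route. The paper also starts from Remark~\ref{r:precoer}\ref{r:precoer_exact} to extract $v_n\in Ty_n$ and $(a_n,b_n)\in\mathcal{G}(S)$ with $4\mathcal{D}_T^{\flat,h}(x,y_n)\geq\|x-a_n\|^2+\|v_n-b_n\|^2$, but then rewrites $\|x-a_n\|=\|(y_n-x)-(y_n-a_n)\|$ and feeds the three sequences $(y_n-x)$, $(y_n-a_n)$, $(v_n-b_n)$ into Lemma~\ref{l:lim}\ref{l:lim_1} (resp.\ \ref{l:lim_2}), the same lemma that drives Theorem~\ref{t:left}; the contradiction bookkeeping, including the Cauchy--Schwarz estimate needed for the supercoercive case, is thus delegated to that lemma rather than redone. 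You instead run the contradiction directly at the level of the theorem: from the assumed bound you deduce boundedness of $(a_n)$ (coercive case) or $\|a_n\|\leq\|x\|+2\sqrt{\mu\|y_n\|}$ (supercoercive case), conclude $\|y_n-a_n\|\to\infty$ and, in the second case, $\|y_n-a_n\|/\|y_n\|\to1$, and then invoke the hypothesis linking $\mathcal{G}(S)$ and $\mathcal{G}(T)$ to contradict the bound on $\|v_n-b_n\|$. Your version is self-contained and arguably more elementary (no appeal to the inner-product identity used in Lemma~\ref{l:lim}\ref{l:lim_2}, so it would also work verbatim in a general reflexive Banach space), and it makes explicit the step ``$(a_n)$ under control $\Rightarrow\|y_n-a_n\|\to\infty$'' that the paper's lemma handles implicitly; the paper's version buys economy, reusing one lemma for both the left and right theorems. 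All the auxiliary facts you rely on (finiteness of the distance forcing $y_n\in\dom T$, strong compactness of $Ty_n$, closedness of $\mathcal{G}(S)$ for the attainment in Remark~\ref{r:precoer}\ref{r:precoer_exact}) are exactly those the paper uses, so there is no gap.
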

\begin{proof}
Let $(y_n)_{n\in \NN}$ satisfy $\|y_n\|\to \infty$ as $n\to \infty$. 

\ref{t:right_dom}: By the boundedness of $\dom T$, there exists $N\in \NN$ such that for $n\geq N$, $y_n\notin \dom T$. Fixing $n\geq N$, the definition of $\mathcal{D}_T^{\flat,h}$ yields $\mathcal{D}_T^{\flat,h}(x,y_n) =\infty$, which implies that $\mathcal{D}_T^{\flat,h}(x,y_n)/\|y_n\| =\infty$, and we are done.

\ref{t:right_coer} \& \ref{t:right_super}: Since $T$ has strongly compact images, $Ty_n$ is compact with respect to the strong topology. By Remark~\ref{r:precoer}\ref{r:precoer_exact}, there exist $v_n\in Ty_n$ and $(a_n,b_n)\in \mathcal{G}(S)$ such that
\begin{equation}
\forall n \in \NN,\quad 4\mathcal{D}_T^{\flat,h}(x,y_n)\geq \|x-a_n\|^2 +\|v_n-b_n\|^2 =\|(y_n-x)-(y_n-a_n)\|^2 +\|v_n-b_n\|^2.
\end{equation}
As $n\to \infty$, $\|y_n-x\|\to \infty$ since $\|y_n\|\to \infty$. 

Now we show \ref{t:right_coer}. Suppose that $(a_n,b_n)\in \mathcal{G}(S)$, $(y_n,v_n)\in \mathcal{G}(T)$, and $\|y_n-a_n\|\to \infty$ imply $\|v_n-b_n\|\to \infty$. We have that if $(y_{k_n}-a_{k_n})_{n\in \NN}$ is a subsequence of $(y_n-a_n)_{n\in \NN}$ with $\|y_{k_n}-a_{k_n}\|\to \infty$, then $\|v_{k_n}-b_{k_n}\|\to \infty$. Applying Lemma~\ref{l:lim}\ref{l:lim_1} to the sequences $(y_n-x)_{n\in \NN}$, $(y_n-a_n)_{n\in \NN}$, and $(v_n-b_n)_{n\in \NN}$, we obtain that $\|(y_n-x)-(y_n-a_n)\|^2 +\|v_n-b_n\|^2\to \infty$, and so $\mathcal{D}_T^{\flat,h}(x,y_n)\to \infty$ as $n\to \infty$. This shows \ref{t:right_coer}.

Now we show \ref{t:right_super}. Suppose that $(a_n,b_n)\in \mathcal{G}(S)$, $(y_n,v_n)\in \mathcal{G}(T)$, and $\|y_n-a_n\|\to \infty$ imply $\|v_n-b_n\|^2/\|y_n-a_n\|\to \infty$. We derive that if $(y_{k_n}-a_{k_n})_{n\in \NN}$ is a subsequence of $(y_n-a_n)_{n\in \NN}$ with $\|y_{k_n}-a_{k_n}\|\to \infty$, then $\|v_{k_n}-b_{k_n}\|^2/\|y_{k_n}-a_{k_n}\|\to \infty$. Now, Lemma~\ref{l:lim}\ref{l:lim_2} completes the proof. This shows \ref{t:right_super}.
\end{proof}

As we will see in the following example, the conditions in Theorem~\ref{t:left} (resp.\ Theorem~\ref{t:right}) are not necessary conditions for the left (resp.\ right) coercivity or supercoercivity of $\mathcal{D}_T^{\flat,h}$.
\begin{example}
Suppose that $X =\RR$. Let $f =\Id\colon \RR\to \RR$, $S =\nabla f =1$, and $h\colon \RR\times \RR\to \RR_{\infty}$ given by
\begin{equation}
h(x,v) =f(x) +f^\ast(v) =x +\iota_{\{1\}}(v).
\end{equation}
Let also $T =0$. Then 
\begin{equation}
\forall (x,y)\in \RR^2,\quad \mathcal{D}_T^h(x,y) =h(x,0) -\langle x,0 \rangle =h(x,0) =\infty.
\end{equation}
Therefore, both $\mathcal{D}_T^h(\cdot,y)$ and $\mathcal{D}_T^h(x,\cdot)$ are supercoercive and hence coercive (for all $x,y\in \RR$), while $S$ and $T$ do not satisfy the assumptions in Theorem~\ref{t:left} nor in Theorem~\ref{t:right}.
\end{example}

\begin{corollary}[Left supercoercivity of $\mathcal{D}_h$]
\label{cor:leftsupercoercivesub}
Let $f \in \Gamma_0(X)$ be such that $\partial f$ is point-to-point, $T =S =\partial f$, $h\in \mathcal{H}(S)$, and $y\in \dom S$. Then the following hold:
\begin{enumerate}
\item If $\dom S$ is bounded,  $\mathcal{D}_{h}(\cdot,y)$ is supercoercive and hence coercive.
\item If $S$ satisfies the property that $(a_n,b_n) \in \mathcal{G}(S)$ and $\|a_n\| \rightarrow \infty$ implies $\| b_n\| \rightarrow \infty$, then $\mathcal{D}_{h}(\cdot,y)$ is coercive.
\item If $(a_n,b_n) \in \mathcal{G}(S)$ and $\|a_n\| \rightarrow \infty$ imply $\|b_n\|^2/\|a_n\| \rightarrow \infty$, then $\mathcal{D}_{h}(\cdot,y)$ is supercoercive.
\end{enumerate}
\end{corollary}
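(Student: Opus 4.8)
The plan is to observe that this corollary is simply the specialization of Theorem~\ref{t:left} to the situation $T = S = \partial f$ with $\partial f$ point-to-point, so that essentially no new work is required beyond checking that the hypotheses of that theorem are met. First I would record that, since $\partial f$ is single-valued on its domain and $T = S = \partial f$, the notational convention introduced just after \eqref{D} applies: $\mathcal{D}_h = \mathcal{D}_T^h = \mathcal{D}_T^{\flat,h} = \mathcal{D}_T^{\sharp,h}$. In particular, every coercivity or supercoercivity statement about $\mathcal{D}_T^{\flat,h}(\cdot,y)$ is literally a statement about $\mathcal{D}_h(\cdot,y)$.

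Next I would verify the standing ``compactness of $Ty$'' hypothesis that Theorem~\ref{t:left}\ref{t:left_coer} and \ref{t:left_super} require. Since $y \in \dom S = \dom \partial f$ and $\partial f$ is point-to-point, $Ty = \partial f(y)$ is a singleton, hence trivially compact with respect to the strong topology. Thus the additional hypothesis needed for parts \ref{t:left_coer} and \ref{t:left_super} of Theorem~\ref{t:left} holds automatically in our setting; part \ref{t:left_dom} needs no such hypothesis at all.

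With these two preliminary remarks in place, the three items of the corollary follow immediately: item (i) is Theorem~\ref{t:left}\ref{t:left_dom} applied to $\dom S$ bounded; item (ii) is Theorem~\ref{t:left}\ref{t:left_coer} with the coercivity property of $S$ phrased exactly as the hypothesis ``$(a_n,b_n)\in\mathcal{G}(S)$ and $\|a_n\|\to\infty$ imply $\|b_n\|\to\infty$''; and item (iii) is Theorem~\ref{t:left}\ref{t:left_super} with the hypothesis ``$\|b_n\|^2/\|a_n\|\to\infty$.'' In each case one simply reads off the conclusion and rewrites $\mathcal{D}_T^{\flat,h}(\cdot,y)$ as $\mathcal{D}_h(\cdot,y)$.

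There is no genuine obstacle here; the only point requiring any care is the bookkeeping observation that the point-to-point hypothesis forces $Ty$ to be a singleton (and hence strongly compact), which is precisely what unlocks parts \ref{t:left_coer} and \ref{t:left_super} of Theorem~\ref{t:left}. One might additionally remark, for completeness, that by Remark~\ref{r:precoer} the corresponding statements for $\mathcal{D}_T^{\sharp,h}$ would follow as well, but since $\mathcal{D}_T^{\sharp,h} = \mathcal{D}_T^{\flat,h} = \mathcal{D}_h$ in the single-valued case this adds nothing new.
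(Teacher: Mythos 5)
Your proposal is correct and follows essentially the same route as the paper: apply Theorem~\ref{t:left} with $T=S=\partial f$, noting that the point-to-point hypothesis makes $\partial f(y)$ a singleton and hence strongly compact. The only detail the paper mentions that you leave implicit is that $f\in\Gamma_0(X)$ guarantees $\partial f$ is maximally monotone, so that the standing assumptions on $S$ and $\mathcal{H}(S)$ apply.
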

\begin{proof}
Apply Theorem~\ref{t:left} with $T=S=\partial f$. Because $f \in \Gamma_0(\mathcal{H})$, we have that $\partial f$ is maximally monotone. The compactness of $\partial f(y)$ comes from the fact that $\partial f$ is point-to-point. 
\end{proof}

\begin{example}[Left supercoercive distances on $\RR$]\label{ex:sqrtx}
Let $X = \RR$ and $f:=x \mapsto |x|^{3/2+\varepsilon}$ for some $\varepsilon >0$, and let $h \in \mathcal{H}(\nabla f)$. Then $\mathcal{D}_{h}$ is left supercoercive.

To check, we need only show that $f$ satisfies the criteria for  Corollary~\ref{cor:leftsupercoercivesub}.

Since $\nabla f: x \mapsto (3/2+\varepsilon){\rm sign}(x)|x|^{1/2+\varepsilon}$, we have that
\begin{equation}
\frac{\|\nabla f(x)\|^2}{\|x\|} \geq \frac{|x|^{1+2\varepsilon}}{|x|} = |x|^{2\varepsilon} \rightarrow \infty \;\; \text{as} \;\; |x| \to \infty,
\end{equation}
showing the sufficient conditions for Corollary~\ref{cor:leftsupercoercivesub}. This example is illustrated in Figure~\ref{fig:sqrtx}, which shows the geometric intuition underpinning Theorem~\ref{t:left}.
\end{example}

\begin{corollary}[Right supercoercivity of $\mathcal{D}_{h}$]
\label{cor:rightsupercoercivesub} 
Let $f \in \Gamma_0(X)$ be such that $\partial f$ is point-to-point, $T =S =\partial f$, $h\in \mathcal{H}(\partial f)$, and $x\in \dom \partial f$. Then the following hold:
\begin{enumerate}
\item
If $\partial f$ is bounded, then $\mathcal{D}_{h}$ is supercoercive and hence coercive.
\item
If $(a_n,b_n),(y_n,v_n)\in \mathcal{G}({\partial f})$ and $\|y_n-a_n\|\to \infty$ imply $\|v_n-b_n\|\to \infty$, then $\mathcal{D}_{h}$ is coercive.
\item
If $(a_n,b_n),(y_n,v_n)\in \mathcal{G}(\partial f)$ and $\|y_n-a_n\|\to \infty$ imply $\|v_n-b_n\|^2/\|y_n-a_n\|\to \infty$, then $\mathcal{D}_{h}$ is supercoercive.
\end{enumerate}
\end{corollary}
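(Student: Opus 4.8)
The plan is to obtain this corollary as a direct specialization of Theorem~\ref{t:right} to the case $T=S=\partial f$, mirroring exactly how Corollary~\ref{cor:leftsupercoercivesub} was derived from Theorem~\ref{t:left}. First I would check that the standing hypotheses of Theorem~\ref{t:right} are met under the assumptions here. Since $f\in\Gamma_0(X)$, the operator $\partial f$ is maximally monotone (Rockafellar's theorem), so it is a legitimate choice of the operator $S$; and $h\in\mathcal{H}(\partial f)$ supplies the required representative function. Because $\partial f$ is assumed point-to-point, each image $\partial f(y)$ is a singleton and therefore strongly compact, which is precisely the extra hypothesis ``$T$ has strongly compact images'' needed for parts \ref{t:right_coer} and \ref{t:right_super} of Theorem~\ref{t:right}. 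Finally, when $T$ is single-valued one has $\mathcal{D}_T^{\flat,h}=\mathcal{D}_T^{\sharp,h}=\mathcal{D}_h$, so every conclusion about $\mathcal{D}_T^{\flat,h}(x,\cdot)$ transfers verbatim to $\mathcal{D}_h(x,\cdot)$, and $x\in\dom\partial f=\dom S$ as required.

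With these identifications, the three items fall out one at a time by matching hypotheses, using the fact that setting $S=T=\partial f$ makes $\mathcal{G}(S)=\mathcal{G}(T)=\mathcal{G}(\partial f)$. For (i), reading ``$\partial f$ is bounded'' as saying that $\dom\partial f$ (equivalently $\mathcal{G}(\partial f)$) is bounded, so that $\dom T$ is bounded, Theorem~\ref{t:right}\ref{t:right_dom} yields supercoercivity and hence coercivity of $\mathcal{D}_h(x,\cdot)$. For (ii), the stated implication ``$(a_n,b_n),(y_n,v_n)\in\mathcal{G}(\partial f)$ and $\|y_n-a_n\|\to\infty$ imply $\|v_n-b_n\|\to\infty$'' is literally the hypothesis of Theorem~\ref{t:right}\ref{t:right_coer}, giving coercivity. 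Item (iii) is the analogous translation of Theorem~\ref{t:right}\ref{t:right_super}, giving supercoercivity.

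Since the argument is a pure specialization, I do not anticipate any real obstacle: the only points that genuinely need a word of justification are the maximal monotonicity of $\partial f$ and the observation that a point-to-point operator automatically has strongly compact (singleton) images; the rest is bookkeeping that lines up the hypotheses with those of Theorem~\ref{t:right}. The one mild subtlety worth flagging explicitly is the phrasing of item (i): one should make clear that ``$\partial f$ bounded'' is being used to mean that its domain is bounded, so that Theorem~\ref{t:right}\ref{t:right_dom} applies without modification.
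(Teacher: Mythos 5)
Your proposal is correct and follows essentially the same route as the paper, which simply applies Theorem~\ref{t:right} with $T=S=\partial f$ and notes that a point-to-point operator automatically has compact images (maximal monotonicity of $\partial f$ being standard for $f\in\Gamma_0(X)$). Your remark about reading ``$\partial f$ bounded'' in item (i) as boundedness of its domain is a reasonable clarification of the statement, consistent with the left-hand analogue in Corollary~\ref{cor:leftsupercoercivesub}.
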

\begin{proof}
Apply Theorem~\ref{t:right} with $T=S=\partial f$. Here $\partial f$ automatically has compact images because it is point-to-point. 
\end{proof}

\begin{example}[Right supercoercive distances on $\RR$]\label{ex:xsquared}
Let $X = \RR$ and $f:=x \mapsto |x|^{p}$ for some $p \geq 2$ and $h \in \mathcal{H}(\partial f)$. Then $\mathcal{D}_{h}$ is right supercoercive.

To check, we need only show that $f$ satisfies the criteria for  Corollary~\ref{cor:rightsupercoercivesub}. 

Since $\nabla f: x \rightarrow {\rm sign}(x)p|x|^{p-1}$ and $p\geq 2$ we have that $(|x-y| \geq 2) \implies \|\nabla f(x)-\nabla f(y)\| \geq |x-y|$. Thus, for $|x-y| \geq 2$,
\begin{equation}
\frac{\| \nabla f (x) - \nabla f(y)\|^2}{\|y-x\|} \geq \frac{|y-x|^2}{|y-x|} = |y-x| \rightarrow \infty \;\;\text{as}\;\; |y-x| \rightarrow \infty,
\end{equation}
showing the sufficient conditions for Corollary~\ref{cor:rightsupercoercivesub}. This example is illustrated at right in Figure~\ref{fig:sqrtx}.
\end{example}

\begin{example}[Functions on $\RR$ which fail the assumptions of Theorems~\ref{t:left} and \ref{t:right}]
Let $X = \RR$ and $f:=x \mapsto |x|^{3/2}$. 

Let $x_n:=n$ and $y_n:=0$. Then, since $\nabla f: x \mapsto {\rm sign}(x)|x|^{1/2}$, we have that
\begin{equation}
d((x_n,\nabla f(y_n)),\mathcal{G}(\nabla f)) = d((n, 0),\mathcal{G}(\nabla f)) \leq d((n, 0),(n,n^{1/2}))=n^{1/2}=x_n^{1/2},
\end{equation}
and so $d((x_n,\nabla f(y_n),\mathcal{G}(\nabla f))^2 = x_n$ for all $n$.
\end{example}

\begin{example}[Theorem conditions sufficient but not necessary]
The conditions of Theorem~\ref{t:left} are sufficient but not necessary. Let $f$ be the Boltzmann-Shannon entropy, and we have that for $x>1$:
\begin{equation}
\frac{\|\nabla f(x)\|^2}{\|x\|}=\frac{\log(x)^2}{x} \leq \frac{x}{x} = 1 \rightarrow 1 \;\;\text{as} \;\;x \rightarrow \infty,
\end{equation}
so the sufficient conditions from Theorem~\ref{t:left} fail. 

The form of $\mathcal{D}_{\sigma_{\log}}$ is given in \eqref{eqn:Dsigma}. If $x$ or $y$ is less than or equal zero, $\mathcal{D}_{\sigma_{\log}}(x,y)=\infty$. Fixing $y>0$, we have that for $x>1$:
\begin{equation}
\frac{\mathcal{D}_{\sigma_{\partial f}}(x,y)}{\|x\|} = \frac{x(\log(x)-\log(y))}{x} = \log(x)-\log(y) \rightarrow \infty \;\;\text{as}\;\; x \rightarrow \infty,
\end{equation}
and so $\mathcal{D}_{\sigma_{\log}}$ is left supercoercive. 
\end{example}

\subsection{Coercivity of the sum of $\mathcal{D}$ and a convex function}\label{sec:coercivityofthesum}

The following propositions and their accompanying proofs extend and follow the template of Bauschke, Combettes, and Noll in \cite[Lemma 2.12]{BCN06}, with modifications necessary in order to handle the greater generality of $\mathcal{D}_T^{\flat,h}$. In the following, $X$ is assumed to be a real Hilbert space, $U_S := \inte \dom S$, and $U_T := \inte \dom T$. 

\begin{proposition}[Left coercivity of the sum of $\mathcal{D}_T^{\flat,h}$ and a convex function]
\label{p:lsum}
Let $\theta\in \Gamma_0(X)$ be such that $U_S\cap \dom\theta\neq \varnothing$ and let $\gamma\in \RR_{++}$. Suppose that one of the following holds:
\begin{enumerate}[label=(\alph*)]
\item\label{p:lsum_dom} 
$U_S\cap \dom\theta$ is bounded and for all $y\in U_T$, $\mathcal{D}_T^{\flat,h}(\cdot,y)$ is coercive.
\item\label{p:lsum_inf} 
$\inf \theta(U_S) >-\infty$ and for all $y\in U_T$, $\mathcal{D}_T^{\flat,h}(\cdot,y)$ is coercive.
\item\label{p:lsum_super} 
For all $y\in U_T$, $\mathcal{D}_T^{\flat,h}(\cdot,y)$ is supercoercive.
\end{enumerate}
Then
\begin{equation}
\label{e:lsum}
\forall y\in U_T,\quad \theta(\cdot) +\frac{1}{\gamma}\mathcal{D}_T^{\flat,h}(\cdot,y) \text{~is coercive}.
\end{equation}
\end{proposition}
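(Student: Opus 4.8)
The plan is to show, in each of the three cases, that for a fixed $y \in U_T$ the function $g := \theta(\cdot) + \tfrac{1}{\gamma}\mathcal{D}_T^{\flat,h}(\cdot,y)$ satisfies $g(x_n) \to \infty$ whenever $\|x_n\| \to \infty$. The common mechanism is that $\theta \in \Gamma_0(X)$, so $\theta$ is minorized by a continuous affine functional: there exist $u \in X$ and $\beta \in \RR$ with $\theta(x) \geq \scal{u}{x} + \beta \geq -\|u\|\,\|x\| + \beta$ for all $x$. Thus for any sequence $(x_n)$ with $\|x_n\| \to \infty$,
\begin{equation}
g(x_n) \geq -\|u\|\,\|x_n\| + \beta + \frac{1}{\gamma}\mathcal{D}_T^{\flat,h}(x_n,y).
\end{equation}
If $\mathcal{D}_T^{\flat,h}(\cdot,y)$ is supercoercive, then $\mathcal{D}_T^{\flat,h}(x_n,y)/\|x_n\| \to \infty$, so the right-hand side still tends to $\infty$; this settles case \ref{p:lsum_super} directly, and it is the cleanest of the three.

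For case \ref{p:lsum_inf}, I would split the sequence. Points $x_n \notin U_S$ have $\mathcal{D}_T^{\flat,h}(x_n,y) = \infty$ and cause no trouble. For $x_n \in U_S$ we have $\theta(x_n) \geq \inf\theta(U_S) =: m > -\infty$, so $g(x_n) \geq m + \tfrac{1}{\gamma}\mathcal{D}_T^{\flat,h}(x_n,y) \to \infty$ by (mere) coercivity of $\mathcal{D}_T^{\flat,h}(\cdot,y)$. Case \ref{p:lsum_dom} is similar but needs a small extra observation: if $U_S \cap \dom\theta$ is bounded, then for $\|x_n\|$ large enough $x_n \notin U_S \cap \dom\theta$, meaning either $x_n \notin U_S$ (whence $\mathcal{D}_T^{\flat,h}(x_n,y) = \infty$) or $x_n \notin \dom\theta$ (whence $\theta(x_n) = \infty$); in either case $g(x_n) = \infty$, and indeed $g$ is itself supercoercive because it equals $\infty$ outside a bounded set. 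In every case the conclusion $g(x_n) \to \infty$ gives coercivity of $g$, which is \eqref{e:lsum}.

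The only mildly delicate point — and the one I would flag as the main obstacle — is case \ref{p:lsum_inf}: one must be careful that the affine minorant bound $-\|u\|\,\|x_n\| + \beta$ is \emph{not} enough on its own when $\mathcal{D}_T^{\flat,h}(\cdot,y)$ is only coercive (not supercoercive), since a linearly-decaying lower bound for $\theta$ could defeat a slowly-growing $\mathcal{D}_T^{\flat,h}$. This is exactly why the hypothesis $\inf\theta(U_S) > -\infty$ is imposed: it replaces the affine minorant by the far stronger uniform lower bound $m$ on the relevant domain, so that no cancellation can occur. The other two cases avoid this issue because either $\theta$ is bounded below on a bounded set (and hence the whole expression blows up for trivial reasons) or the supercoercivity of $\mathcal{D}_T^{\flat,h}$ dominates any affine decay. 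No deeper machinery is needed; the argument is essentially a careful bookkeeping of which term carries the growth on which part of the sequence.
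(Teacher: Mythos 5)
Your case \ref{p:lsum_super} is fine and is essentially the paper's argument (the paper simply cites \cite[Corollary~16.21]{BC17} for ``supercoercive $+$ affine $=$ coercive'' instead of writing out the limit). The gap is in cases \ref{p:lsum_dom} and \ref{p:lsum_inf}, and it comes from the claim ``$x_n \notin U_S$ implies $\mathcal{D}_T^{\flat,h}(x_n,y)=\infty$.'' That implication is false: by the definition in Section~\ref{ss:generalizesbregman}, $\mathcal{D}_T^{\flat,h}(x,y)=+\infty$ only when $x\notin\dom S$, whereas $U_S=\inte\dom S$ is in general a proper subset of $\dom S$. Your dichotomy therefore misses the points $x_n\in(\dom S\setminus U_S)\cap\dom\theta$: there $\mathcal{D}_T^{\flat,h}(x_n,y)$ can be finite, and the uniform lower bound $\theta(x_n)\geq\inf\theta(U_S)$ that your case \ref{p:lsum_inf} relies on is not available without further argument (the affine minorant alone does not suffice, as you yourself point out, since it decays linearly while the distance is only coercive). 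The same hole breaks your direct argument for \ref{p:lsum_dom}: for large $n$ you can only conclude $x_n\notin U_S\cap\dom\theta$, which leaves open the possibility $x_n\in(\dom S\setminus U_S)\cap\dom\theta$ with both terms of the sum finite, so $g$ is not ``$+\infty$ outside a bounded set'' as claimed.

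The paper closes exactly this gap differently. For \ref{p:lsum_inf} it argues by contradiction: a bounded value of the sum forces $x_n\in\dom S$ (not merely $U_S$), and then it invokes \cite[Proposition~11.1(iv)]{BC17} to get $\inf\theta(\dom S)=\inf\theta(U_S)$ --- a convexity (line-segment) fact that uses $U_S\cap\dom\theta\neq\varnothing$ --- so that $\theta$ is bounded below on all of $\dom S$, and the coercivity of $\mathcal{D}_T^{\flat,h}(\cdot,y)$ then yields the contradiction. For \ref{p:lsum_dom} it does not argue directly at all: it shows \ref{p:lsum_dom} $\implies$ \ref{p:lsum_inf} by bounding $\theta$ from below on the bounded set $U_S\cap\dom\theta$ via the affine minorant and Cauchy--Schwarz. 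To repair your write-up you would either need to reproduce that reduction, or prove yourself that $\theta\geq\inf\theta(U_S)$ on $\dom S\setminus U_S$ (and, for \ref{p:lsum_dom}, that boundedness of $U_S\cap\dom\theta$ controls $(\dom S\setminus U_S)\cap\dom\theta$ as well); neither step is in your proposal as written.
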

\begin{proof}
We will show that \ref{p:lsum_dom} $\implies$ \ref{p:lsum_inf} $\implies$ \eqref{e:lsum} and that \ref{p:lsum_super} $\implies$ \eqref{e:lsum}. 
First, we have from \cite[Theorem 9.20]{BC17} that there exists $(u,\alpha)\in X\times \RR$ such that 
\begin{equation}
\theta\geq \langle u,\cdot \rangle +\alpha.
\end{equation}

\ref{p:lsum_dom} $\implies$ \ref{p:lsum_inf}: By Cauchy--Schwarz inequality,
\begin{equation}
\forall x\in X,\quad \theta(x)\geq \langle u,x \rangle +\alpha\geq -\|u\|\|x\| +\alpha,
\end{equation}
which yields
\begin{equation}
\inf\theta(U_S) =\inf\theta(U_S\cap \dom\theta)\geq -\|u\| \sup_{x\in U_S\cap \dom\theta} \|x\| +\alpha > -\infty
\end{equation}
since $U_S\cap \dom\theta$ is bounded. Hence, \ref{p:lsum_dom} $\implies$ \ref{p:lsum_inf}.

\ref{p:lsum_inf} $\implies$ \eqref{e:lsum}: Let $y\in U_T$. Suppose for a contradiction that there exist a sequence $(x_n)_{n \in N}$ in $X$ and a constant $\mu\in \RR_{++}$ such that $\|x_n\|\to \infty$ and  
\begin{equation}
\label{e:lsum_bounded}
\forall n\in \NN,\quad \theta(x_n) +\frac{1}{\gamma}\mathcal{D}_T^{\flat,h}(x_n,y) \leq \mu.
\end{equation}
For each $n\in \NN$, since $\theta(x_n) >-\infty$, it follows that $\mathcal{D}_T^{\flat,h}(x_n,y) <\infty$, and so $x_n\in \dom S$. Next, according to \cite[Proposition~11.1(iv)]{BC17}, $\inf\theta(\dom S) =\inf\theta(U_S)$, which implies that $\theta(x_n)\geq \inf\theta(U_S) >-\infty$ for all $n\in \NN$. Combining with \eqref{e:lsum_bounded}, we obtain that
\begin{equation}
\forall n\in \NN,\quad \mathcal{D}_T^{\flat,h}(x_n,y) \leq \gamma(\mu -\inf\theta(U_S)) <\infty,
\end{equation}
which contradicts the coercivity of $\mathcal{D}_T^{\flat,h}(\cdot,y)$.

\ref{p:lsum_super} $\implies$ \eqref{e:lsum}: Notice that
\begin{equation}
\theta(\cdot) +\frac{1}{\gamma}\mathcal{D}_T^{\flat,h}(\cdot,y)\geq \langle u,\cdot \rangle +\alpha +\frac{1}{\gamma}\mathcal{D}_T^{\flat,h}(\cdot,y).
\end{equation}
The right-hand side is the sum of a supercoercive function and an affine function, and hence a coercive function due to \cite[Corollary 16.21]{BC17}. Since $\theta(\cdot) +\frac{1}{\gamma}\mathcal{D}_T^{\flat,h}(\cdot,y)$ is bounded from below by a coercive function, it is coercive. 
\end{proof}

\begin{proposition}[Right coercivity of the sum of $\mathcal{D}_T^{\flat,h}$ and a convex function]
Let $\theta\in \Gamma_0(X)$ be such that $U_T\cap \dom\theta\neq \varnothing$ and let $\gamma\in \RR_{++}$. Suppose that one of the following holds:
\begin{enumerate}[label=\textbf{(\alph*)}]
\item
$U_T\cap \dom\theta$ is bounded and for all $x\in U_S$, $\mathcal{D}_T^{\flat,h}(x,\cdot)$ is coercive.
\item
$\inf\theta(U_T) >-\infty$ and for all $x\in U_S$, $\mathcal{D}_T^{\flat,h}(x,\cdot)$ is coercive.
\item
For all $x\in U_S$, $\mathcal{D}_T^{\flat,h}(x,\cdot)$ is supercoercive.
\end{enumerate}
Then
\begin{equation}\label{e:rsum}
\forall x\in U_S,\quad \theta(\cdot) +\frac{1}{\gamma}\mathcal{D}_T^{\flat,h}(x,\cdot) \text{~is coercive}.
\end{equation}
\end{proposition}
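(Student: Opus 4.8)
The plan is to mirror the proof of Proposition~\ref{p:lsum} with the roles of the two variables (and of $S$ and $T$) interchanged: establish the chain (a)~$\implies$~(b)~$\implies$~\eqref{e:rsum}, and separately (c)~$\implies$~\eqref{e:rsum}. As in the left case, the engine is the affine minorant from \cite[Theorem~9.20]{BC17}: fix $(u,\alpha)\in X\times\RR$ with $\theta\geq\langle u,\cdot\rangle+\alpha$, so that $\theta(y)\geq-\|u\|\,\|y\|+\alpha$ for every $y$ by Cauchy--Schwarz.

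For (a)~$\implies$~(b): since $U_T\cap\dom\theta\neq\varnothing$ and $\theta\equiv+\infty$ off $\dom\theta$, the boundedness of $U_T\cap\dom\theta$ yields $\inf\theta(U_T)=\inf\theta(U_T\cap\dom\theta)\geq-\|u\|\sup_{y\in U_T\cap\dom\theta}\|y\|+\alpha>-\infty$. For (b)~$\implies$~\eqref{e:rsum}: fix $x\in U_S$ and suppose, for a contradiction, that there are $(y_n)_{n\in\NN}$ with $\|y_n\|\to\infty$ and $\mu\in\RR_{++}$ such that $\theta(y_n)+\tfrac1\gamma\mathcal{D}_T^{\flat,h}(x,y_n)\leq\mu$ for all $n$. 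Then $\theta(y_n)<\infty$ forces $y_n\in\dom\theta$, and $\mathcal{D}_T^{\flat,h}(x,y_n)<\infty$ forces $y_n\in\dom T$; invoking the domain--infimum identity $\inf\theta(\dom T)=\inf\theta(U_T)$ --- the analogue, via \cite[Proposition~11.1(iv)]{BC17}, of the identity $\inf\theta(\dom S)=\inf\theta(U_S)$ used in the left case --- gives $\theta(y_n)\geq\inf\theta(U_T)>-\infty$, whence $\mathcal{D}_T^{\flat,h}(x,y_n)\leq\gamma\big(\mu-\inf\theta(U_T)\big)<\infty$, contradicting the coercivity of $\mathcal{D}_T^{\flat,h}(x,\cdot)$. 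For (c)~$\implies$~\eqref{e:rsum}: for fixed $x\in U_S$, the sum $\theta(\cdot)+\tfrac1\gamma\mathcal{D}_T^{\flat,h}(x,\cdot)$ is minorized by $\langle u,\cdot\rangle+\alpha+\tfrac1\gamma\mathcal{D}_T^{\flat,h}(x,\cdot)$, which is an affine function plus a supercoercive one and hence coercive by \cite[Corollary~16.21]{BC17}; being minorized by a coercive function, the sum is itself coercive.

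The only step that is not a word-for-word translation of Proposition~\ref{p:lsum} is the domain--infimum identity $\inf\theta(\dom T)=\inf\theta(U_T)$: in the left case this rested on $\overline{\dom S}$ being convex with $\inte\overline{\dom S}=\inte\dom S$ --- properties of maximally monotone operators --- so that every point of $\dom S$ is approachable along a segment by points of $U_S$ on which $\theta$ is controlled by convexity. This is where I expect the real work to be, and I would dispose of it by noting that in the situations where the proposition is actually applied --- $T$ affine, or $T=S=\partial f$ as in Corollary~\ref{cor:rightsupercoercivesub} --- $\overline{\dom T}$ is again convex, so the convex-minorant argument behind \cite[Proposition~11.1(iv)]{BC17} transfers with $\dom T$ in place of $\dom S$. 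Everything else --- the affine minorant, the Cauchy--Schwarz bound, and the passage from an affine-plus-supercoercive lower estimate to coercivity --- carries over unchanged from the left case.
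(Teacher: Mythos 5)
Your argument is essentially the paper's own proof: the paper disposes of this proposition with the single remark that it is analogous to Proposition~\ref{p:lsum}, and the mirrored chain (a) $\implies$ (b) $\implies$ \eqref{e:rsum} together with (c) $\implies$ \eqref{e:rsum} that you write out is exactly what is intended. The subtlety you isolate, however, is genuine and is the one place where ``analogous'' is not automatic: the identity $\inf\theta(\dom T)=\inf\theta(U_T)$ is not a free translation of $\inf\theta(\dom S)=\inf\theta(U_S)$, because the latter exploits near convexity of $\dom S$ (namely $\dom S\subseteq\overline{U_S}$ once $U_S\neq\varnothing$), a consequence of the maximal monotonicity of $S$, whereas the standing assumptions impose nothing of the sort on $T$. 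Without some condition forcing $\dom T\subseteq\overline{U_T}$, parts (a) and (b) can in fact fail: take $X=\RR^2$, $S=\Id$, $h=F_{\Id}$ (so $h(x,v)-\langle x,v\rangle=\tfrac14\|x-v\|^2$), $\theta(y_1,y_2)=-y_1$, $\gamma=1$, and let $T$ be given by $Ty=\{y\}$ on the open unit ball, $T(t,0)=\{(2\sqrt{t},0)\}$ for $t\geq 2$, and $Ty=\varnothing$ otherwise; then $U_T$ is the open unit ball, hypotheses (a) and (b) hold and $\mathcal{D}_T^{\flat,h}(x,\cdot)$ is coercive for every $x\in U_S=\RR^2$, yet for $x=(4,0)$ one computes $\theta((t,0))+\mathcal{D}_T^{\flat,h}(x,(t,0))=-t+(\sqrt{t}-2)^2=4-4\sqrt{t}\to-\infty$ along the ray, so the sum is not coercive. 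Hence your instinct to secure the identity only under near convexity of $\dom T$ (automatic when $T$ is maximally monotone, affine, or $T=S=\partial f$, i.e.\ in every application the paper makes, such as Corollary~\ref{cor:rightsupercoercivesub}) is not pedantry but the correct reading of the result, and part (c), which bypasses the identity entirely, needs no such condition, exactly as you argue.
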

\begin{proof}
This is analogous to the proof of Proposition~\ref{p:lsum}.
\end{proof}

\section{Conclusion}\label{sec:conclusion}

In Section~\ref{sec:preliminaries}, we illuminated the similarities between Bregman distances and the new GBDs, explaining the domain conditions under which they are equal when the Fenchel--Young representative is employed. We also introduced the lower closed GBD, a variant whose advantages we motivated in Sections~\ref{sec:examples} and \ref{sec:properties}.

In Section~\ref{sec:examples}, we provided detailed examples of how to compute the new GBDs, illustrating with the energy and the Boltzmann--Shannon entropy, whose Bregman distances respectively correspond to the classical Moreau case and the Kullback--Leibler divergence. We compared the Fenchel--Young representative case with the two cases of the Fitzpatrick representative and its conjugate. These are the two other most natural representative functions to consider, because they serve as book-ends for the representative set $\mathcal{H}(S)$, as motivated in Section~\ref{sec:preliminaries}. 

In Section~\ref{sec:Fitzconj} we answered the open question of finding the conjugate for the Fitzpatrick function of the logarithm. In so-doing, we demonstrated how to use the graphical characterizations of representative functions in order to compute GBDs, and we illustrated the role that special functions like Lambert $\W$ play in computational discovery. The method of computational discovery that we used is prototypical of what one might employ in similar situations where the symbolic computation poses a challenge.

Section~\ref{sec:properties} contains the most important theoretical contribution of this work: a framework for verifying the coercivity and supercoercivity of the left and right distances, as well as the coercivity of the sum of these distances together with a Legendre function. We have also illustrated how this framework for sufficiency possesses a useful geometric interpretation, because the GBDs provide an upper estimate on a set distance. In our examples, we illustrated what might go wrong when sufficient criteria do not hold. These coercivity properties are important, because of the role they play in establishing asymptotic properties for envelopes and proximity operators in the classical Bregman case, and also in establishing existence of minimizers of regularized problems; see, for example, \cite{BDL17,BurDut10,BI98,BI99,BS01}. Such properties are important, because many optimization algorithms may be viewed as special cases of gradient descent applied to envelope functions.

\subsubsection*{Future work}

The coercivity framework we have established makes possible several new avenues of inquiry. While the conditions we provide for verifying coercivity and supercoercivity in Section~\ref{sec:properties} are sufficient, they are not always necessary. An important future work is to catalogue useful (computable) distances for which the coercivity results hold. In particular, by establishing the aforementioned coercivity framework, we have set the table for a study of the left and right envelopes, along with their corresponding proximity operators. A much more interesting question is whether certain optimization algorithms might be viewed as gradient descent applied to GBD envelopes other than already-known Fenchel--Young cases. Another natural question is: what do the dual characterizations of such algorithms look like?

\subsection*{Acknowledgements}

The authors are grateful to Yair Censor for his warm comments on an early version of this manuscript, and for his helpful and detailed historical remarks on Bregman distances. We also thank the two anonymous referees for their careful comments and suggestions, which resulted in an improvement of the original presentation. MND was partially supported by the Australian Research Council (ARC) Discovery Project DP160101537. MND visited University of South Australia in 2018; this visit was instrumental to this work, and he acknowledges their hospitality. SBL was supported by an Australian Mathematical Society Lift-Off Fellowship and Hong Kong Research Grants Council PolyU153085/16p.

\end{document}